%
%

\documentclass[MSNbibl,number,citesort,dvips]{arxbj}
\usepackage{upgreek}
\usepackage{graphicx}

%

\aid{0}
\volume{19}
\issue{5A}
\pubyear{2013}
\firstpage{1535}
\lastpage{1558}
\doi{10.3150/12-BEJ418} 

\makeatletter
\newtheorem{lemma}{Lemma}
\newtheorem{corollary}{Corollary}
\newtheorem{proposition}{Proposition}
\newproclaim{definition}{Definition}
\newproclaim{assumption}{Assumption}
\newremark{example}{Example}

\newcommand{\bzero}{\boldsymbol{0}}
\newcommand{\bh}{\mathbf{h}}
\newcommand{\br}{\mathbf{r}}
\newcommand{\bs}{\mathbf{s}}
\newcommand{\by}{\mathbf{y}}
\newcommand{\cL}{\mathcal{L}}
\newcommand{\cS}{\mathcal{S}}
\newcommand{\hv}{\hat{v}}
\newcommand{\hc}{\hat{c}}
\newcommand{\dint}{\int\!\!\!\int}

\newcommand{\ag}{\alpha}
\newcommand{\ga}{\gamma}
\newcommand{\Gg}{\Gamma}
\newcommand{\dg}{\delta}
\newcommand{\eg}{\varepsilon}
\newcommand{\kg}{\kappa}
\newcommand{\la}{\lambda}
\newcommand{\sg}{\sigma}
\newcommand{\zg}{\zeta}
\newcommand\refeq[1]{(\ref{e:#1})}
\newcommand{\eqref}[1]{(\ref{#1})}
\makeatother

\begin{document}
\begin{frontmatter}

\title{Consistency of the mean and the principal components of
spatially distributed functional data}
\runtitle{Consistency of mean and PCs for spatial functional data}

\begin{aug}
\author[1]{\fnms{Siegfried} \snm{H\"ormann}\corref{}\thanksref{1}\ead[label=e1]{shormann@ulb.ac.be}} \and
\author[2]{\fnms{Piotr} \snm{Kokoszka}\thanksref{2}\ead[label=e2]{Piotr.Kokoszka@colostate.edu}}
\runauthor{S. H\"ormann and P. Kokoszka} 
\address[1]{D\'epartement de Math\'emathique, Universit\'e Libre de
Bruxelles, CP 210,
Bd du Triomphe,
\mbox{B-1050} Brussels, Belgium. \printead{e1}}
\address[2]{Department of Statistics,
Colorado State University,
Fort Collins, CO 80523-1877, USA.\\ \printead{e2}}
\end{aug}

\received{\smonth{1} \syear{2011}}
\revised{\smonth{10} \syear{2011}}

%
\begin{abstract}
This paper develops a framework for the estimation of the functional
mean and the functional principal components when the functions form
a random field. More specifically, the data we study consist of
curves $X(\mathbf{s}_k; t), t \in[0, T]$, observed at spatial points
$\mathbf{s}_1, \mathbf{s}_2, \ldots, \mathbf{s}_N$. We establish
conditions for the
sample average (in space) of the $X(\mathbf{s}_k)$ to be a consistent
estimator of the population mean function, and for the usual
empirical covariance operator to be a consistent estimator of the
population covariance operator. These conditions involve an
interplay of the assumptions on an appropriately defined dependence
between the functions $X(\mathbf{s}_k)$ and the assumptions on the spatial
distribution of the points $\mathbf{s}_k$. The rates of convergence
may be
the same as for i.i.d. functional samples, but generally depend on the
strength of dependence and appropriately quantified distances
between the points $\mathbf{s}_k$. We also formulate conditions for the
lack of consistency.
\end{abstract}

%
\begin{keyword}
\kwd{consistency}
\kwd{estimation}
\kwd{functional data}
\kwd{functional principal components}
\kwd{spatial dependence}
\kwd{spatial sampling design}
\end{keyword}

\end{frontmatter}

\section{Introduction} \label{s:i}
This paper develops aspects of theory for
functional data observed at spatial locations.
The data consist of curves $X(\bs_k)=\{X(\bs_k; t), t \in[0,
T]\}$, observed at spatial points $\bs_1, \bs_2, \ldots, \bs_N$. Such
data structures are quite common, but often the spatial
dependence and the spatial distribution of the points $\bs_k$ are not
taken into account. A well-known example is the Canadian temperature
and precipitation data used in
Ramsay and Silverman \cite{ramsay:silverman:2005}. The annual curves are available
at 35 locations, some of which are quite close, and so the curves have
very similar characteristics, others are very remote with notably
different curves.

There has not been much research on fundamental properties of
spatially distributed functional data.
Delicado  \textit{et al.} \cite{delicado:2010} review recent
contributions to the methodology for spatially distributed functional
data. For geostatistical functional data, several exploratory
approaches to kriging have been proposed. Typically fixed basis
expansions are used, see Yamanishi and
Tanaka \cite{yamanishi:tanaka:2003} and
Bel  \textit{et al.} \cite{bel:2011}.
A general theoretical framework has to address several problems. The
first issue is the dimensionality of the index space. While in time
series analysis, the process is indexed by an equispaced scalar
parameter, we need here a $d$-dimensional index space. For model
building this makes a big difference since the dynamics and dependence
of the process have to be described in all directions, and the typical
recurrence equations used in time series cannot be employed. The model
building is further complicated by the fact that the index space is
often continuous (geostatistical data). Rather than defining a random
field $\{\xi(\bs);\bs\in\mathbb{R}^d\}$ via a specific model
equations, dependence conditions are imposed, in terms of the decay of
the covariances or using mixing conditions. Another feature peculiar
to random field theory is the design of the sampling points; the
distances between them play a fundamental role. Different asymptotics
hold in the presense of clusters and for sparsely distributed points.
At least three types of point distributions have been considered
(Cressie \cite{cressie:1993}): When the region $R_N$ where the points
$\{\bs_{i,N};1\leq i \leq N\}$ are sampled remains bounded, then we
are in the so-called \emph{infill domain sampling} case. Classical
asymptotic results, like the law of large numbers or the central limit
theorem will usually fail, see Lahiri \cite{lahiri:1996}. The other
extreme situation is described by the \emph{increasing domain
sampling}. Here a minimum separation between the sampling points
$\{\bs_{i,N}\}\in R_N$ for all $i$ and $N$ is required. This is of
course only possible if $\operatorname{diam}(R_N) \to\infty$. We shall
also explore the \emph{nearly infill} situation studied by
Lahiri \cite{lahiri:2003} and Park \textit{et al.} \cite{park:kim:park:hwang:2009}. In
this case, the domain of the sampling region becomes unbounded
($\operatorname{diam}(R_N)\to\infty$), but at the same time the
number of
sites in any given subregion tends to infinity, that is, the points
become more dense. These issues are also studied by
Zhang \cite{zhang:2004}, Loh \cite{loh:2005},
Lahiri and Zhu \cite{lahiri:zhu:2006}, Du  \textit{et al.} \cite{du:zhang:mandrekar:2009}. We
formalize these concepts in Section~\ref{s:me}.
Finally, the interplay of the geostatistical spatial structure and the
functional temporal structure must be cast into a workable framework.

The paper is organized as follows. Section \ref{s:me} introduces the
statistical setting. It also compares our conditions
to those typically assumed for scalar spatial processes. In Sections
\ref{s:cm} and \ref{s:cc} we establish consistency results,
respectively, for the functional mean and the covariance operator.
Section \ref{s:example} explains, by means of general theorems and
examples, when the sample principal components are not consistent. The
proofs of the main results are collected in Section \ref{s:p-cm}.

To make this presentation more streamlined, we have outsourced
some proofs, further details and several examples. They are available
as supplemental
material: H\"ormann and
Kokoszka \cite{suppl:hormann:kokoszka:2011}.
%
\section{Preliminaries and assumptions}
\label{s:me}
We assume $\{ X(\bs), \bs\in\mathbb{R}^d\}$ is a
random field taking values in $L^2=L^2([0,1])$, that is, each
$X(\bs)$ is a square integrable function defined on $[0,1]$.
The value of this function at $t\in[0,1]$ is denoted by
$X(\bs; t)$. With the usual inner product in $L^2$, the norm
of $X(\bs)$ is
\[
\|X(\bs)\| = \biggl\{\int X^2(\bs; t)\,\mathrm{d}t\biggr\}^{1/2}.
\]
The mean function $\mu(\bs)=\{EX(\bs;t), t\in[0,1]\}$
and the covariance operator is
then defined for $x\in L^2$ by
\[
C_{\bs,\bs}(x)= E\bigl[ \langle X(\bs)-
\mu(\bs) , x \rangle\bigl(X(\bs)-\mu(\bs)\bigr)\bigr].
\]
More generally, we define the cross-covariance operators
\[
C_{\bs_1,\bs_2}(x)= E\bigl[ \langle X(\bs_1)-
\mu(\bs_1) , x \rangle\bigl(X(\bs_2)-\mu(\bs_2)\bigr)\bigr].
\]
For the existence of $C_{\bs_1,\bs_2}$ a minimal assumption is that
the variables have finite second moments in the sense that
%
\begin{equation}\label{e:si}
E\|X(\bs)\|^2<\infty\qquad \forall\bs.
\end{equation}

To think of our observations $X(\bs)$ as curves in $L^2$ is convenient
and motivated this work, but our propositions in Sections~\ref{s:cm},
\ref{s:cc} and
\ref{s:example} only require the general assumption
that $\{ X(\bs), \bs\in\mathbb{R}^d\}$
is a field taking values in some separable Hilbert space. So particularly
our results hold true when $L^2$ is replaced by $\mathbb{R}^p$. To
to the best of our knowledge, our results are new as well in the vector case.

Our goal is to estimate the unknown mean curves and the principal
components (FPC's). FPC's are intimately connected with covariance operators,
as we will describe in some detail in the next section, and likewise estimation
of FPC's is based on estimation of covariance operators.
Thereby, we are estimating \emph{across
space} and \emph{not across time.} A minimal requirement for this to
make sense is
then that all locations share a common mean curve and that the
covariance operator is the same for all locations, respectively:
%
\begin{equation}\label{e:stat}
\mu(\bs)=\mu\quad\mbox{and}\quad C_{\bs,\bs}=C.
\end{equation}

Although \eqref{e:stat} is apparently necessary and would also be
quite natural if we were in time series context, it may be not
realistic in some spatial data situations. Let us briefly sketch
how our methods can still be useful by
employing a spatio-temporal framework. In this case we
suppose to have for each location $\bs$ a
functional time series $\{X_i(\bs),i=1,2,\ldots\}$. To avoid confusion
between the time parameter $i$ ($i=1,2,3,\ldots$) and the ``intraday
time'' parameter $t$ ($t\in[0,1]$),
we will employ for this paragraph the notation $X_i(\bs;t)$.
We then assume the following model
\[
X_i(\bs;t)=\alpha(\bs;t)+\mu_i(t)+Y_i(\bs;t),\qquad i=1,2,3,\ldots,
\]
where $\{Y_i(\bs;t), t\in[0,1]\}$ are curves at spatial locations which
satisfy \refeq{stat} with $\mu\equiv0$ and where for each
fixed $\bs$ the functional time series $\{X_i(\bs;\cdot),
i=1,2,3,\ldots\}$ is stationary
and weakly dependent (e.g., as assumed in H{\"o}rmann and
Kokoszka \cite{hormann:kokoszka:2010}).
The random curves $\{\mu_i(t), t\in[0,1]\}$ are zero mean curves and
form a certain ``basis level''
shared by all curves $\{X_i(\bs;t), t\in[0,1]\}$ across space at time
$i$. In our setup,
the problem of interest is to determine
for a given day $i$ the curve $\{\mu_i(t), t\in[0,1]\}$. We might
think of daily
temperature curves measured across some region.
The curve $\{\mu_i(t), t\in[0,1]\}$ amounts to a general
deviation from normal on day $i$ which persists over the whole region
(e.g., on
a hot day, it is not unlikely that all stations will show
curves above average). Due to geographical differences
the mean curves $\alpha(\bs;t)$ might be different at different locations,
but they can be estimated
individually by $\hat\alpha(\bs;t)=\frac{1}{M}\sum
_{i=1}^MX_{i}(\bs;t)$, when we have a sample
of $M$ days.
So we can assume that we can
detrended our data and then work with new observations $X_i'(\bs
;t)=\mu_i(t)+Y_i(\bs;t)$,
to which our theory applies, when now $i$ is fixed.

Besides \refeq{stat}, we don't impose any further stationarity
assumption on the field
$\{X(\bs)\}$.
%
%
%
%
\subsection{Functional principal components}
Functional principal components play a fundamental role in functional
data analysis, much greater than the usual multivariate principal
components. This is mostly due to the fact that the
Karhunen--Lo\`eve expansion allows to represent functional data
in a concise way. This property has been extensively used and studied
in various settings.
To name only a few illustrative references, we cite
Hall and
Hosseini-Nasab \cite{hall:h-n:2006},
Reiss and Ogden \cite{reiss:ogden:2007},
Gabrys and
Kokoszka \cite{gabrys:kokoszka:2007},
Benko \textit{et al.} \cite{benko:hardle:kneip:2009},
Paul and Peng \cite{paul:peng:2009},
Jiang and Wang \cite{jiang:wang:2010}
and Gabrys \textit{et al.} \cite{gabrys:horvath:kokoszka:2010}.
Depending on the structure of the data, theoretical analyses
emphasize various aspects of the estimation process, with smoothing
in i.i.d. samples having being particularly carefully studied.
This paper focuses on the spatial dependence and distribution of the
curves, which has received no attention so far.

Suppose $X_1, X_2, \ldots, X_N$ are mean zero identically
distributed elements of $L^2$
such that $E\|X\|^4 < \infty$.
The eigenfunctions of the covariance operator of $X_1$ are the
functional principal components,
denoted $v_k$.
Up to a sign, they are estimated by the empirical FPC's (EFPC's), denoted
$\hv_k$ and defined as the eigenfunctions of the empirical covariance
operator
\[
\widehat C_N(x) = \frac{1}{N} \sum_{n=1}^N \langle X_n, x \rangle X_n,\qquad
x \in L^2.
\]

The distance between $v_k$ and $\hv_k$ is determined by the distance
between $C$ and $\widehat C_N$. This follows from Lemma \ref{l:B4.3}.
Due to its importance in the exposition that follows, we provide a
precise statement. An analog of Lemma \ref{l:B4.3} has often been used
for i.i.d. functions under more restrictive assumptions. To state Lemma
\ref{l:B4.3}, consider two compact operators $C$ and $K$ with singular
value decompositions
%
\begin{equation}\label{e:C-K} C(x) = \sum_{j=1}^\infty\la_j
\langle x, v_j \rangle f_j, \qquad K(x) = \sum_{j=1}^\infty\ga_j \langle x, u_j
\rangle g_j.
\end{equation}
Recall that a linear operator $K$ in a separable
Hilbert space $H$ is said to be Hilbert--Schmidt, if for some
orthonormal basis $\{e_i\}$ of $H$
\[
\|K\|_{\cS}^2:=\sum_{i\geq1}\|K(e_i)\|_{H}^2<\infty.
\]
Then $\|\cdot\|_{\cS}$ defines a norm on the space of all operators
satisfying
this condition. The norm is independent of the choice of the basis.
This space is again a Hilbert space with the inner product
\[
\langle K_1, K_2\rangle_{\cS}
=\sum_{ i\geq1}\langle K_1(e_i), K_2(e_i)\rangle.
\]

Set
\[
v_j^\prime=\hc_j v_j,\qquad \hc_j=\operatorname{sign}(\langle u_j,v_j
\rangle).
\]
\begin{lemma} \label{l:B4.3}
Suppose $C, K \in\cL$ are two compact operators with singular
value decompositions~\refeq{C-K}.
If $C$ is symmetric, $f_j=v_j$ in \eqref{e:C-K},
and its eigenvalues satisfy
%
\begin{equation}\label{e:la>d}
\la_1 > \la_2> \cdots> \la_d > \la_{d+1},
\end{equation}
then
\[
\|u_j - v_j^\prime\| \le\frac{2\sqrt{2}}{\ag_j} \|K-C\|_\cL,\qquad 1
\le j\le p,
\]
where $\ag_1 = \la_1- \la_2$ and
$
\ag_j = \min( \la_{j-1} - \la_j, \la_j - \la_{j+1}), 2 \le j \le p.
$
\end{lemma}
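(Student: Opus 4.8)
The plan is to reduce the estimate to a bound on the ``off-diagonal'' Fourier coefficients $\lip u_j,v_i\rip$, $i\neq j$, of $u_j$ in the orthonormal basis $\{v_i\}$. First I note that, $C$ being compact, symmetric and positive definite, it is injective, so $\{v_i\}$ is a complete orthonormal basis of the underlying space; since $K$ is also symmetric and positive definite, its singular value decomposition coincides with its spectral decomposition, so that $Ku_j=\ga_j u_j$ with $\ga_1\geq\ga_2\geq\cdots$, and $Cv_i=\la_i v_i$. Expanding $u_j=\sum_i\lip u_j,v_i\rip v_i$, putting $a_j=|\lip u_j,v_j\rip|\in[0,1]$, and using $\hc_j\lip u_j,v_j\rip=a_j$ together with $\|u_j\|=\|v_j^\prime\|=1$, I obtain
\bd
\|u_j-v_j^\prime\|^2=2(1-a_j)=(a_j-1)^2+\sum_{i\neq j}\lip u_j,v_i\rip^2\leq 2\sum_{i\neq j}\lip u_j,v_i\rip^2 ,
\ed
where the last step uses $(a_j-1)^2\leq 1-a_j^2=\sum_{i\neq j}\lip u_j,v_i\rip^2$ for $a_j\in[0,1]$. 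In particular $\|u_j-v_j^\prime\|\leq\sqrt2$ unconditionally; this cheap bound (which is the whole purpose of the sign correction $\hc_j$) will dispose of the regime where $\|K-C\|_\cS$ is large.

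Next I would estimate $\sum_{i\neq j}\lip u_j,v_i\rip^2$. The key identity is that, for every index $i$,
\bd
(\ga_j-\la_i)\lip u_j,v_i\rip=\lip Ku_j,v_i\rip-\lip u_j,Cv_i\rip=\lip (K-C)u_j,v_i\rip ,
\ed
which follows at once from $Ku_j=\ga_j u_j$, $Cv_i=\la_i v_i$, and the symmetry of $C$. The main obstacle is to show that the denominators $\ga_j-\la_i$ for $i\neq j$ are uniformly bounded away from $0$. From \refeq{la>d} and the monotone ordering of the eigenvalues of $C$ one checks that $|\la_j-\la_i|\geq\ag_j$ for all $i\neq j$ and all $1\leq j\leq d$ (for $i<j$ use $\la_i\geq\la_{j-1}>\la_j$; for $i>j$ use $\la_j>\la_{j+1}\geq\la_i$, the range $i>d$ being covered by $\la_i\leq\la_{d+1}<\la_d$). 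Combining this with the Weyl perturbation inequality $|\ga_j-\la_j|\leq\|K-C\|_\cS$ for the $j$-th eigenvalues (the operator norm being dominated by $\|\cdot\|_\cS$) yields $|\ga_j-\la_i|\geq\ag_j-\|K-C\|_\cS$ for every $i\neq j$.

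Finally I split into two cases. If $\|K-C\|_\cS\geq\ag_j/2$, then $\|u_j-v_j^\prime\|\leq\sqrt2\leq(2\sqrt2/\ag_j)\|K-C\|_\cS$ and there is nothing more to prove. If $\|K-C\|_\cS<\ag_j/2$, then $|\ga_j-\la_i|\geq\ag_j/2$ for all $i\neq j$, so the identity above, Parseval's relation, and $\|(K-C)u_j\|\leq\|K-C\|_\cS$ (again operator norm $\leq\|\cdot\|_\cS$, with $\|u_j\|=1$) give
\bd
\sum_{i\neq j}\lip u_j,v_i\rip^2=\sum_{i\neq j}\frac{\lip (K-C)u_j,v_i\rip^2}{(\ga_j-\la_i)^2}\leq\frac{4}{\ag_j^2}\sum_{i\geq 1}\lip (K-C)u_j,v_i\rip^2\leq\frac{4}{\ag_j^2}\|K-C\|_\cS^2 ,
\ed
and substituting this into the first display yields $\|u_j-v_j^\prime\|^2\leq 8\|K-C\|_\cS^2/\ag_j^2$, which is exactly the claim. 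As indicated, the only genuinely delicate point is the uniform lower bound on the spectral gaps $|\ga_j-\la_i|$; once that is secured, the rest is routine bookkeeping with orthonormal expansions, and I would merely take care to apply the Weyl inequality to the correctly indexed eigenvalue and to check that the gap argument also covers the tail indices $i>d$.
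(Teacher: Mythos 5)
Your proof is correct: the off-diagonal expansion $\|u_j-v_j^\prime\|^2\le 2\sum_{i\neq j}\lip u_j,v_i\rip^2$, the identity $(\ga_j-\la_i)\lip u_j,v_i\rip=\lip (K-C)u_j,v_i\rip$, the Weyl bound $|\ga_j-\la_j|\le\|K-C\|_\cS$, and the case split on whether $\|K-C\|_\cS\ge\ag_j/2$ all check out, including the gap estimate $|\la_j-\la_i|\ge\ag_j$ for the tail indices. The paper does not prove the lemma itself but defers to Corollary 1.6 of Gohberg et al.\ and the proof of Lemma 4.3 of Bosq (2000), and your argument is essentially a self-contained reconstruction of that standard route, so no further comparison is needed.
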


Lemma \ref{l:B4.3} can be proven using Corollary 1.6 on page 99
of Gohberg \textit{et al.} \cite{gohberg:1990} and following the lines of
the proof of Lemma 4.3 of Bosq \cite{bosq:2000}.

If the functional observations $X_k, k=1,2, \ldots$\,,
are independent, then
%
\begin{equation}\label{e:standard-rate}
\limsup_{N\to\infty} N E\| \widehat C_N - C\|_{\cS}^2 < \infty.
\end{equation}
Consequently, for such functional observations, under \refeq{la>d},
\[
\max_{1 \le k \le d} E\|\hc_k \hv_k - v_k\|^2 = \mathrm{O}( N^{-1}).
\]

H{\"o}rmann and
Kokoszka \cite{hormann:kokoszka:2010} showed that \refeq{standard-rate}
continues to hold for weakly dependent time series, in particular
for $m$-dependent $X_k$.
It is not difficult to show that if spatially distributed functions
are such that $X(\bs)$ is independent of $X(\bs^\prime)$ if the
distance between $\bs$ and $\bs^\prime$
is greater than $m$, then \refeq{standard-rate} need not hold.
It is even possible that
EFPC's $\hv_k$ do not converge at all. See Section~\ref{s:example}.
\subsection{Dependence assumptions}
To develop an estimation framework, we impose
conditions on the decay of the
cross--covariances $E[\langle X(\bs_1)-\mu, X(\bs_2)-\mu\rangle]$,
as the distance between $\bs_1$ and $\bs_2$ increases. We
shall use the distance function defined by the Euclidian norm in
$\mathbb{R}^d$, denoted $\|\bs_1-\bs_2\|_2$,
but other distance functions can be used as well.
\begin{assumption}\label{ass:dep}
The spatial process $\{X(\bs), \bs\in\mathbb{R}^d\}$ 
satisfies \refeq{si} and \refeq{stat}. In addition,
%
\begin{equation}\label{eq:dep}
| E \langle X(\bs_1)-\mu, X(\bs_2)-\mu\rangle|
\leq h (\|\bs_1-\bs_2\|_2 ),
\end{equation}
where $h\dvtx[0,\infty)\to[0,\infty)$ with $h(x)\searrow0$, as $x\to
\infty$.
\end{assumption}

If $\{e_j\}$ is an orthonormal basis (ONB) of $L^2$, then it can be
easily seen
that \eqref{eq:dep} is equivalent to
%
\begin{equation}\label{eq:dep1}
\biggl|\sum_{j\geq1}\langle C_{\bs_1,\bs_2}(e_j), e_j\rangle\biggr|
\leq h (\|\bs_1-\bs_2\|_2 ).
\end{equation}
For any such ONB a Fourier expansion of $X(\bs)$ yields
%
\begin{equation}\label{e:xi}
X(\bs;t) = \mu+\sum_{j=1}^\infty\xi_j(\bs) e_j(t),\qquad
\bs\in\mathbb{R}^d, t \in[0,1],
\end{equation}
where $\xi_j(\bs)=\langle X(\bs)-\mu,e_j\rangle$.
Using the relation
\[
\langle C_{\bs_1,\bs_2}(e_j), e_j\rangle
=E [\xi_j(\bs_1)\xi_j(\bs_2) ],
\]
the more specifical assumption
%
\begin{equation}\label{eq:corr}
|E [\xi_j(\bs_1)\xi_j(\bs_2) ] |\leq\phi_j(\|\bs_1-\bs_2\|_2),
\end{equation}
on the scalar fields, gives \eqref{eq:dep}, if
%
\begin{equation}\label{e:dep-j}
\sum_{j\geq1}\phi_j (\|\bs_1-\bs_2\|_2 )
\leq h (\|\bs_1-\bs_2\|_2 ).
\end{equation}

Examples \ref{ex:mixing} and \ref{ex:s-decay} consider typical spatial
covariance functions, and show when condition \refeq{dep-j}
holds with a function $h$ as in Assumption \ref{ass:dep}.
\begin{example}\label{ex:mixing}
Suppose that the fields $\{\xi_j(\bs), \bs\in\mathbb{R}^d\}$,
$j\geq1$, are zero mean, strictly stationary
and \emph{$\alpha$-mixing.}
That is
\[
\sup_{(A,B)\in\sigma(\xi_j(\bs))\times\sigma(\xi_j(\bs+\bh
))}|P(A)P(B)-P(A\cap B)|\leq
\alpha_j(\bh),
\]
with $\alpha_j(\bh)\to0$ if $\|\bh\|_2\to\infty$. Let $\alpha
_j'(h)=\sup\{\alpha_j(\bh)\dvt
\|\bh\|_2=h\}$. Then
$\alpha_j^*(h)=\sup\{\alpha_j'(x)\dvt\allowbreak
x\geq h\}\searrow0$ as $h\to\infty$.\vadjust{\goodbreak}
Using stationarity and the main result in Rio \cite{rio:1993}
it follows that
\begin{eqnarray*}
|E[\xi_j(\bs_1)\xi_j(\bs_2)]|&=&|E[\xi_j(\bzero)\xi_j(\bs_2-\bs
_1)]|\\
&\leq&2\int_0^{2\alpha_j(\bs_2-\bs_1)}Q_{j}^2(u)\,\mathrm{d}u\\
&\leq&2\int_0^{2\alpha_j^*(\|\bs_2-\bs_1\|_2)}Q_{j}^2(u)\,\mathrm{d}u\\
&{\hspace*{2.8pt}=:}&\phi_j(\|\bs_2-\bs_1\|_2),
\end{eqnarray*}
where $Q_{j}(u)=\inf\{t\dvt P(|\xi_j(\bzero)|>t)\leq u\}$ is the
quantile function of $|\xi_j(\bzero)|$. Note that $\alpha_h(\bh
)\leq1/4$ for any $\bh$, and thus
$\phi_j(x)\leq2\int_0^1Q_{j}^2(u)\,\mathrm{d}u= 2E[\xi_j^2(\bzero)]$. If
$\sum_{j\geq1}E\xi_j^2(\bzero)<\infty$, then \eqref{eq:dep}
holds with $h(x)=\sum_{j\geq1}\phi_j(x)$. (Note that
$|h(x)|\searrow0$ follows
from $\alpha_j^*(x)\searrow0$ and the monotone convergence theorem.)

We note that $\alpha$-mixing is one of the classical assumptions in
random field literature to establish limit theorems. It is in fact a much
stronger assumption than ours and it is suitable if one needs more
delicate results, like a central limit theorem (see, e.g., Bolthausen \cite
{bolthausen:1982}) or uniform laws of large numbers, see Jenish and
Prucha \cite
{jenish:prucha:2009}. Besides the
restriction to scalar observations, many papers restrict to the
so-called ``purely
increasing domain sampling,'' an assumption that we are going to
further relax in the following.
\end{example}
\begin{example} \label{ex:s-decay}
Suppose \eqref{eq:corr} holds, and set $h(x) = \sum_{j \ge1}\phi_j(x)$.
If each $\phi_j$ is a powered exponential covariance function defined
by
\[
\phi_j(x) = \sg_j^2 \exp\biggl\{- \biggl(\frac{x}{\rho_j}\biggr)^p \biggr\}.
\]
Then $h$ satisfies the conditions of Assumption \ref{ass:dep}
if
%
\begin{equation}\label{e:cond-h}
\sum_{j \ge1} \sg_j^2 < \infty\quad
\mbox{and}\quad \sup_{j \ge1} \rho_j < \infty.
\end{equation}
Condition \refeq{cond-h} is also sufficient if all $\phi_j$ are
in the Mat{\'e}rn class, see Stein \cite{stein:1999},
with the same $\nu$, that is,
\[
\phi_j(x) = \sg_j^2 x^\nu K_\nu(x/\rho_j),
\]
because the modified Bessel function $K_\nu$ decays monotonically
and approximately
exponentially fast; numerical calculations show that $K_\nu(s)$
practically vanishes if $s> \nu$. Condition \refeq{cond-h} is
clearly sufficient for spherical $\phi_j$ defined (for $d=3$) by
\[
\phi_j(x) = \cases{
\sg_j^2 \biggl(1 - \displaystyle\frac{3x}{2\rho_j} + \frac{x^3}{2\rho_j^3} \biggr),
&\quad $x \le\rho_j$,\cr
0, &\quad $x > \rho_j$}
\]
because $\phi_j$ is decreasing on $[0, \rho_j]$.
\end{example}

Assumption~\ref{ass:dep} is appropriate when studying estimation of
the mean function. For the estimation of the covariance operator, we
need to impose a different assumption. Recall that if $z$ and $y$ are
elements in some Hilbert space $H$ with norm $\|\cdot\|_H$, the
operator $z\otimes y$, is defined by $z\otimes y(x)=\langle z,
x\rangle y$. In the following assumption, we suppose that the mean of
the functional field is zero. This is justified by notational
convenience and because we deal with the consistent estimation of the
mean function separately.
\begin{assumption}\label{ass:dep-cov}
The spatial process $\{X(\bs), \bs\in\mathbb{R}^d\}$
satisfies \eqref{e:stat} with $\mu\equiv0$ and has 4 moments,
that is, $E\langle X(\bs), x\rangle=0$, $\forall x\in L^2$
and $E\|X(\bs)\|^4<\infty$. In addition,
%
\begin{equation}\label{eq:dep_cov}
|E\langle X(\bs_1)\otimes X(\bs_1)-C ,
X(\bs_2)\otimes X(\bs_2)-C\rangle_{\cS} |
\leq H (\|\bs_1-\bs_2\|_2 ),
\end{equation}
where $H\dvtx[0,\infty)\to[0,\infty)$ with $H(x)\searrow0$, as $x\to
\infty$.
\end{assumption}

Assumption \ref{ass:dep-cov} cannot be verified using only conditions
on the covariances of the scalar fields $\xi_j$ in \eqref{e:xi}
because these covariances do not specify the 4th order structure of
the model. This can be done if the random field is Gaussian or if
additional structure is imposed. We discuss this Assumption~\ref
{ass:dep-cov} in
more detail in supplemental material (H\"ormann and
Kokoszka, \cite{suppl:hormann:kokoszka:2011}, Section~S.2).
%
\subsection{Spatial distribution of the sampling points}
As already noted, for spatial processes assumptions on
the distribution of the sampling points are as important as those on
the covariance structure. To formalize the different sampling schemes
introduced in Section \ref{s:i}, we propose the following measure of
``minimal dispersion'' of some point cloud $\mathfrak{S}$:
\[
I_\rho(\bs,\mathfrak{S})=\bigl|\{\by\in\mathfrak{S}\dvt \|\bs-\by\|
_2\leq\rho\}\bigr|/|
\mathfrak{S}|
\quad\mbox{and}\quad
I_\rho(\mathfrak{S})=\sup\{ I_\rho(\bs,\mathfrak{S}), \bs\in
\mathfrak{S} \},
\]
where $|\mathfrak{S}|$ denotes the number of elements of
$\mathfrak{S}$. The quantity $I_\rho(\mathfrak{S})$ is the maximal
fraction of $\mathfrak{S}$-points in a ball of radius $\rho$ centered
at an element of $\mathfrak{S}$. Notice that $ 1/|\mathfrak{S}| \le
I_\rho(\mathfrak{S})\le1$. We call $\rho\mapsto
I_\rho(\mathfrak{S})$ the \emph{intensity function} of $\mathfrak{S}$.
\begin{definition}\label{def:nids}
For a sampling scheme $\mathfrak{S}_N=\{\bs_{i,N}; 1\leq i\leq S_N\}
$, $S_N\to\infty$,
we consider the following conditions:
\begin{enumerate}[(iii)]
\item[(i)] there is a $\rho>0$ such that
$\limsup_{N\to\infty}I_{\rho}(\mathfrak{S}_N)>0$;
\item[(ii)] for some sequence $\rho_N\to\infty$
we have $I_{\rho_N}(\mathfrak{S}_N)\to0$;
\item[(iii)] for any fixed $\rho>0$ we have
$S_N I_{\rho}(\mathfrak{S}_N)\to\infty$.
\end{enumerate}
We call a deterministic sampling scheme
$\mathfrak{S}_N=\{\bs_{i,N}; 1\leq i\leq S_N\}$

\emph{Type A}: if \textup{(i)} holds;

\emph{Type B}: if \textup{(ii)} and \textup{(iii)} hold;

\emph{Type C}: if \textup{(ii)} holds, but there is a $\rho>0$ such that
$\limsup_{N\to\infty}S_NI_\rho(\mathfrak{S}_N)< \infty$.\\
If the sampling scheme is stochastic we call it Type \emph{A}, \emph{B} or
\emph{C}
if relations \emph{(i)}, \emph{(ii)} and \emph{(iii)} hold with $I_\rho(\mathfrak{S}_N)$
replaced by $EI_\rho(\mathfrak{S}_N)$.
\end{definition}

Type A sampling is related to purely infill domain sampling which
corresponds to $I_{\rho}(\mathfrak{S}_N)=1$ for all $N\geq1$,
provided $\rho$ is large enough. However, in contrast to the purely
infill domain sampling, it still allows for
a non-degenerate asymptotic theory for sparse enough subsamples
(in the sense of Type B or C).
%

A brief reflection shows that assumptions (i) and (ii)
are mutually exclusive. Combining (ii) and (iii) implies that the
points intensify (at least at certain spots) excluding the purely
increasing domain sampling. Hence, the Type B sampling corresponds to
the nearly infill domain sampling. If only (ii) holds, but (iii) does
not (Type C sampling) then the sampling scheme corresponds to purely
increasing domain sampling.

Our conditions are more general than those proposed so far.
%
We treat below two special
cases which are closely related to those considered by
Lahiri \cite{lahiri:2003}. The points are assumed to be on a grid
of an increasing size, or to have a density.
We show how our more
general assumptions look in these special cases, and provide
additional intuition behind the sampling designs formulated in
Definition~\ref{def:nids}. They also set a framework for some
results of Sections \ref{s:cm} and \ref{s:cc}.
%
\subsection{Non-random regular design}\label{se:nrrd}
Let $\mathcal{Z}(\boldsymbol{\delta})$ be a lattice in $\mathbb{R}^d$
with increments $\delta_i$ in the $i$th direction. Let
$\delta_0=\min\{\delta_1,\ldots,\delta_d\}$,
$\Delta^d=\prod_{i=1}^d\delta_i$ and let $R_N=\ag_NR_0$, where $R_0$
is some bounded Riemann measurable
Borel-set in $\mathbb{R}^d$ containing the origin.
A set is Riemann measurable if its indicator function is
Riemann integrable. This condition excludes highly irregular
sets $R_0$. The scaling parameters $\ag_N>0$ are assumed to be
non-decreasing and will be specified below in Lemma~\ref{le:scaling}.
We assume without loss of generality that $\operatorname{Vol}(R_0)=1$, hence
$\operatorname{Vol}(R_N)=\ag_N^d$. Typical examples are
$R_0=\{x\in\mathbb{R}^d\dvt \|x\|\leq z_{1,d}\}$, with $z_{1,d}$ equal
to the radius of the $d$-dimensional sphere with volume 1, or
$R_0=[-1/2,1/2]^d$. The sampling points $\mathfrak{S}_N$ are defined
as $\{\bs_{k,N}, 1\leq k\leq
S_N\}=\mathcal{Z}(\eta_N\boldsymbol{\delta})\cap R_N$, where $\eta_N$
is chosen such that the sample size $S_N\sim N$. We remark
that we only introduce $S_N$ as it is generally not possible
by the just described construction to define $\eta_N$ such, that
we would get exactly $N$ points on the grid to fall in $R_N$. It is intuitively
clear that $\operatorname{Vol}(R_N)\approx\eta_N^d\Delta^d S_N$,
suggesting
%
\begin{equation}\label{eta}
\eta_N=\frac{\ag_N}{\Delta N^{1/d}}.
\end{equation}
A formal proof that $\eta_N$ in \eqref{eta} assures $S_N\sim N$
is given in Section S.3 of the supplemental material.

The following lemma, whose proof
is also given in the supplemental material
relates the non-random regular design to
Definition~\ref{def:nids}.
We write $a_N\gg b_N$ if $\limsup b_N/a_N<\infty$.

\begin{lemma}\label{le:scaling} In the above
described design, the following pairs of statements
are equivalent:
\begin{enumerate}[(iii)]
\item[(i)] $\ag_N$ remains bounded $\Leftrightarrow$ Type \emph{A} sampling;
\item[(ii)] $\ag_N\to\infty$ and
$\ag_N=\mathrm{o}(N^{1/d})$ $\Leftrightarrow$ Type \emph{B} sampling;
\item[(iii)] $\ag_N\gg N^{1/d}$ $\Leftrightarrow$ Type \emph{C} sampling.
\end{enumerate}
\end{lemma}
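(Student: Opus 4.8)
The plan is to work directly from the definition of the intensity function $I_\rho(\mathfrak{S}_N)$ and the geometry of the design $\mathfrak{S}_N=\mathcal{Z}(\eta_N\boldsymbol{\delta})\cap R_N$, with $\eta_N=\ag_N/(\Delta N^{1/d})$ as in \eqref{eta}. The key preliminary observation, which I would establish first, is a two-sided estimate for $I_\rho(\mathfrak{S}_N)$ in terms of $\eta_N$ and $N$. Fix $\bs\in\mathfrak{S}_N$ and $\rho>0$. The number of lattice points of $\mathcal{Z}(\eta_N\boldsymbol{\delta})$ inside the ball $B(\bs,\rho)$ is, when $\eta_N\to 0$, asymptotically $\mathrm{Vol}(B(0,\rho))/(\Delta^d\eta_N^d)=c_d\rho^d/(\Delta^d\eta_N^d)$ by Lemma~\ref{le:riemann} applied to a ball (a Riemann-measurable set); and it is bounded by a constant (depending on $\rho,\Delta,d$ only) when $\eta_N$ stays bounded below. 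Since $|\mathfrak{S}_N|=S_N\sim N$, this gives
\[
I_\rho(\mathfrak{S}_N)\asymp \frac{1}{N}\max\!\left(1,\ \frac{c_d\rho^d}{\Delta^d\eta_N^d}\right)
= \frac{1}{N}\max\!\left(1,\ \frac{c_d\rho^d N}{\ag_N^d}\right)
= \max\!\left(\frac{1}{N},\ \frac{c_d\rho^d}{\ag_N^d}\right),
\]
uniformly in $\bs$, where I would be careful that the edge effects from $\bs$ near the boundary of $R_N$ only help (fewer points), so the $\sup$ over $\bs\in\mathfrak{S}_N$ is achieved, up to constants, at an interior point. This single asymptotic identity is really the crux; once it is in hand, each of (i)--(iii) is a routine case analysis.

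For (i): if $\ag_N$ is bounded, say $\ag_N\le A$, then for $\rho$ large enough that $c_d\rho^d/A^d\ge 1$ we get $I_\rho(\mathfrak{S}_N)\asymp c_d\rho^d/\ag_N^d\ge c_d\rho^d/A^d>0$, so $\limsup_N I_\rho(\mathfrak{S}_N)>0$, i.e.\ Type~A. Conversely, if $\ag_N\to\infty$ then for every fixed $\rho$ eventually $c_d\rho^d/\ag_N^d<1/N$ fails only finitely often — more precisely $I_\rho(\mathfrak{S}_N)\asymp\max(1/N,c_d\rho^d/\ag_N^d)\to 0$ — so no $\rho$ witnesses Type~A; hence Type~A $\Rightarrow$ $\ag_N$ bounded.

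For (ii) and (iii) I would use the same formula. If $\ag_N\to\infty$, then taking $\rho_N=\ag_N^{1/2}\to\infty$ gives $I_{\rho_N}(\mathfrak{S}_N)\asymp\max(1/N,\,c_d\ag_N^{d/2}/\ag_N^d)=\max(1/N,\,c_d\ag_N^{-d/2})\to 0$, so condition (ii) of Definition~\ref{def:nids} holds whenever $\ag_N\to\infty$ (in particular in both cases (ii) and (iii)). The dichotomy between Type~B and Type~C is then governed by condition (iii), $S_N I_\rho(\mathfrak{S}_N)\to\infty$ for every fixed $\rho$: since $S_N\sim N$,
\[
S_N I_\rho(\mathfrak{S}_N)\asymp N\max\!\left(\frac1N,\ \frac{c_d\rho^d}{\ag_N^d}\right)
=\max\!\left(1,\ \frac{c_d\rho^d N}{\ag_N^d}\right).
\]
This tends to $\infty$ for every fixed $\rho$ iff $N/\ag_N^d\to\infty$, i.e.\ iff $\ag_N=o(N^{1/d})$; and it stays bounded (for some, equivalently every, $\rho$, taking $\limsup$) iff $N/\ag_N^d=O(1)$, i.e.\ iff $\ag_N\gg N^{1/d}$ in the notation of the excerpt. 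Combined with the fact that $\ag_N\to\infty$ is forced (it is implied by $\limsup I_\rho<\infty$ failing for large $\rho$, hence by not being Type~A, and both (ii) and (iii) of the Lemma presuppose the non-Type-A regime — or one checks directly that $\ag_N\gg N^{1/d}$ with $N\to\infty$ already forces $\ag_N\to\infty$), this yields (ii) $\ag_N\to\infty,\ \ag_N=o(N^{1/d})\Leftrightarrow$ Type~B and (iii) $\ag_N\gg N^{1/d}\Leftrightarrow$ Type~C.

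The main obstacle I anticipate is not any of the case analyses but the uniformity in the preliminary estimate: justifying that $\sup_{\bs\in\mathfrak{S}_N}I_\rho(\bs,\mathfrak{S}_N)$ has the same order as the ``interior'' count, i.e.\ that boundary points of $\mathfrak{S}_N$ inside $R_N=\ag_N R_0$ cannot inflate the ratio. Because $R_0$ is bounded and Riemann measurable and $\ag_N$ is non-decreasing, a ball of fixed radius $\rho$ (or even radius $\rho_N=o(\ag_N)$) around any sampling point meets $R_N$ in a region whose lattice-point count is at most the full-ball count, so the $\sup$ is controlled from above by the interior value; and it is controlled from below by evaluating at a point near the center of $R_N$ (which exists since $R_0$ contains the origin). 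Making the application of Lemma~\ref{le:riemann} to the rescaled balls $\eta_N^{-1}B(0,\rho)=B(0,\rho/\eta_N)$ rigorous — one really applies it with $K$ a fixed unit ball and $\bg_N=\eta_N/\rho\to 0$ — is the one place where some care is needed, but it is a direct invocation of the lemma just proved.
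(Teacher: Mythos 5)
Your plan is correct and follows essentially the same route as the paper's proof: both reduce the lemma to a two-sided estimate $I_\rho(\mathfrak{S}_N)\asymp(\rho/\ag_N)^d$ obtained by counting lattice points in balls via Lemma~\ref{le:riemann} (with the lower bound coming from a ball interior to $R_0$ guaranteed by Riemann measurability), followed by the same case analysis; the paper merely handles case (iii) separately through $\mathrm{Vol}(L_{i,N})\not\to 0$ instead of your unified $\max(1/N,\cdot)$ formula. The only cosmetic caveat is that your asymptotic identity should be capped at $1$ (it is valid for $\rho\le\varepsilon\ag_N$, as the paper states), which does not affect any of the conclusions.
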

%
\subsection{Randomized design}\label{se:rd}
Let $\{\bs_{k}, 1\leq k\leq N\}$ be i.i.d. random vectors with a density
$f(\bs)$ which has support on a Borel set $R_0\subset\mathbb{R}^d$
containing the origin and satisfying $\operatorname{Vol}(R_0)=1$.
Again we
assume Riemann measurability for $R_0$ to exclude highly irregular
sets. For the sake of simplicity, we shall assume that on $R_0$ the
density is bounded away from zero, so that we have $0<f_L\leq
\inf_{x\in R_0}f(x)$. The point set $\{\bs_{k,N}, 1\leq k\leq N\}$ is
defined by $\bs_{k,N}=\ag_N\bs_{k}$ for $k=1,\ldots,N$. For fixed
$N$, this is equivalent to: $\{\bs_{k,N}, 1\leq k\leq N\}$ is an i.i.d.
sequence on $R_N=\ag_N R_0$ with density $\ag_N^{-d}f(\ag_N^{-1}\bs)$.

We cannot expect to obtain a full analogue of Lemma~\ref{le:scaling}
in the randomized setup. For Type~C sampling, the problem is much
more delicate, and a closer study shows that it is related to the
oscillation behavior of multivariate empirical processes. While
Stute \cite{stute:1984} gives almost sure upper bounds, we would need
here sharp results on the moments of the modulus of continuity of
multivariate empirical process. Such results exist, see
Einmahl and
Ruymgaart \cite{einmahl:ruymgaart:1987}, but are connected to technical
assumptions on the bandwidth for the modulus (here determined by
$\alpha_N$) which are not satisfied in our setup. Hence, a detailed
treatment would go beyond the scope of this paper. We thus state here
the following lemma whose proof
is given in the supplemental material.
\begin{lemma}\label{le:scaling_random} In the above
described sampling scheme the following statements
hold:
\begin{enumerate}[(ii)]
\item[(i)] $\ag_N$ remains bounded $\Rightarrow$ Type \emph{A} sampling;
\item[(ii)] $\ag_N\to\infty$ and
$\ag_N=\mathrm{o}(N^{1/d})$ $\Rightarrow$ Type \emph{B} sampling.
\end{enumerate}
\end{lemma}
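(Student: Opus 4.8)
The plan is to verify, for each implication, the relevant conditions of Definition~\ref{def:nids} with $I_\rho(\mathfrak{S}_N)$ replaced by $EI_\rho(\mathfrak{S}_N)$ and $S_N=N$. Two elementary observations will be used throughout. Since $R_0$ is Riemann measurable with $\mathrm{Vol}(R_0)=1$, its boundary is Lebesgue null, so its interior has measure $1$ and in particular is non-empty; fix $\bx_0$ in that interior and $\delta_0>0$ with $U_{\delta_0}(\bx_0)\subset R_0$. Also, because $\bs_{k,N}=\ag_N\bs_k$, for $i\neq j$ we have $P(\|\bs_{i,N}-\bs_{j,N}\|_2\le\rho)=P(\|\bs_i-\bs_j\|_2\le\rho/\ag_N)$, so only the ratio $\rho/\ag_N$ enters; this probability will be bounded below by a constant multiple of $(\rho/\ag_N)^d$ (using $f\ge f_L$ on $R_0$ and the ball $U_{\delta_0}(\bx_0)$) and above by $\psi(\rho/\ag_N)$, where $\psi(r):=\sup_{\bw\in\overline{R_0}}\int_{U_r(\bw)}f(\bz)\,d\bz$. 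Part~(i) then follows at once: if $\ag_N\le A$ for all $N$, every $\mathfrak{S}_N$ lies in the fixed bounded set $B:=\bigcup_{N}\ag_N\overline{R_0}$ (bounded since $R_0$ is bounded and $\ag_N\le A$), so with $\rho:=\mathrm{diam}(B)$ we have $U_\rho(\bs)\supset\mathfrak{S}_N$ and hence $I_\rho(\bs,\mathfrak{S}_N)=1$ for every $\bs\in\mathfrak{S}_N$; thus $EI_\rho(\mathfrak{S}_N)=1$ and the scheme is Type~A.

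For part~(ii) I first check condition~(iii) of Definition~\ref{def:nids}. Bounding below the maximum that defines $I_\rho(\mathfrak{S}_N)$ by its term indexed by $\bs_{1,N}$ and taking expectations, using exchangeability of the sample,
\[
N\,EI_\rho(\mathfrak{S}_N)\ge 1+(N-1)\,P\left(\|\bs_1-\bs_2\|_2\le\rho/\ag_N\right).
\]
For $0<r\le\delta_0/2$, conditioning on $\bs_1$ and retaining only the event $\{\bs_1\in U_{\delta_0/2}(\bx_0)\}$, on which $U_r(\bs_1)\subset R_0$, yields $P(\|\bs_1-\bs_2\|_2\le r)\ge f_L^2V_d^2(\delta_0/2)^d\,r^d=:c\,r^d$, with $V_d=\mathrm{Vol}(U_1(\bzero))$. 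Since $\ag_N\to\infty$, eventually $\rho/\ag_N\le\delta_0/2$, so $N\,EI_\rho(\mathfrak{S}_N)\ge 1+c\rho^d(N-1)/\ag_N^d\to\infty$ because $\ag_N^d=o(N)$; as $\rho>0$ was arbitrary, condition~(iii) holds.

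The remaining task, and the main obstacle, is to exhibit $\rho_N\to\infty$ with $EI_{\rho_N}(\mathfrak{S}_N)\to0$: the difficulty is that $I_{\rho_N}(\mathfrak{S}_N)$ is a \emph{maximum} over the $N$ admissible centres rather than an average, so controlling the mean alone is not enough. I would first show $\psi(r)\searrow0$ as $r\searrow0$. If not, there are $\delta>0$, $r_n\to0$ and $\bw_n\in\overline{R_0}$ with $\int_{U_{r_n}(\bw_n)}f\ge\delta$; passing to a convergent subsequence $\bw_n\to\bw_*\in\overline{R_0}$ (compactness), for each fixed $s>0$ one has $U_{r_n}(\bw_n)\subset U_s(\bw_*)$ for large $n$, hence $\int_{U_s(\bw_*)}f\ge\delta$ for all $s>0$, contradicting the absence of atoms of the measure $f(\bz)\,d\bz$. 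Now take $\rho_N:=\ag_N^{1/2}$, so $\rho_N\to\infty$ and $\rho_N/\ag_N=\ag_N^{-1/2}\to0$; by the change of variables above, $\sup_{\bx}P(\|\bx-\bs_{2,N}\|_2\le\rho_N)\le\psi(\ag_N^{-1/2})=:p_N^*\to0$, the supremum ranging over the admissible centres $\bx\in\ag_N\overline{R_0}$.

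Finally, fix $\varepsilon>0$. A union bound over the $N$ centres, then conditioning on the selected centre $\bs_{i,N}$ (given which the count of the remaining points within $\rho_N$ is $\mathrm{Bin}(N-1,p(\bs_{i,N}))$, stochastically dominated by $\mathrm{Bin}(N-1,p_N^*)$), gives
\[
P\left(I_{\rho_N}(\mathfrak{S}_N)>\varepsilon\right)\le N\,P\left(1+\mathrm{Bin}(N-1,p_N^*)>\varepsilon N\right).
\]
Since $p_N^*\to0$ this binomial has mean $o(N)$, so for all large $N$ a Chernoff bound makes the right-hand side at most $N\,e^{-c(\varepsilon)N}\to0$. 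Because $I_{\rho_N}(\mathfrak{S}_N)\le1$ we obtain $EI_{\rho_N}(\mathfrak{S}_N)\le\varepsilon+P(I_{\rho_N}(\mathfrak{S}_N)>\varepsilon)$, hence $\limsup_N EI_{\rho_N}(\mathfrak{S}_N)\le\varepsilon$; letting $\varepsilon\downarrow0$ establishes condition~(ii), and together with the preceding step the scheme is Type~B. The two delicate points are exactly those flagged: upgrading a mean bound to a bound on the maximum (the union bound plus the exponential binomial tail) and the uniform-in-centre estimate $\psi(r)\to0$ (atom-freeness on the compact set $\overline{R_0}$); as noted before the lemma, it is the analogous Type~C claim that resists this method and is therefore left aside.
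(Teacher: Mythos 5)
Your proof is correct, and for part (i) and for condition (iii) of Definition~\ref{def:nids} it follows essentially the same route as the paper: a lower bound on $EI_\rho(\mathfrak{S}_N)$ obtained by exchangeability (the paper uses Jensen's inequality and the bound \eqref{e:randlower}, you bound the maximum by the term centred at $\bs_{1,N}$; these are the same estimate up to the constant coming from $f\geq f_L$ on an interior ball). Where you genuinely diverge is on condition (ii) of Definition~\ref{def:nids}: the paper's proof stops at the lower bound \eqref{e:randlower} and declares that "statement (ii) follows easily," but a lower bound on $EI_\rho$ cannot by itself produce a sequence $\rho_N\to\infty$ with $EI_{\rho_N}(\mathfrak{S}_N)\to 0$, and you are right that the real obstacle is that $I_{\rho_N}(\mathfrak{S}_N)$ is a maximum over $N$ random centres rather than an average. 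Your treatment of this step --- the uniform small-ball estimate $\psi(r)\searrow 0$ via compactness of $\overline{R_0}$ and atom-freeness of $f\,d\bz$, the union bound over centres, the conditional binomial domination, and the Chernoff tail --- is a complete and correct argument that the paper simply does not supply (a crude $L^2$ bound on the maximum would only cover $\ag_N\gg N^{1/(2d)}$, so something of this strength is actually needed; an alternative would be a Vapnik--Chervonenkis uniform law for balls). In short, your write-up proves strictly more than the paper's does; the only caveat is that it is longer than what the authors evidently had in mind, and your choice $\rho_N=\ag_N^{1/2}$ is one convenient instance of the general requirement $\rho_N\to\infty$, $\rho_N=o(\ag_N)$.
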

%
\section{Consistency of the sample mean function}
\label{s:cm}
We start with a general setup, and show
that the rates can be improved in special cases.
The proofs of the main results,
Propositions \ref{p:gnis}, \ref{pr:mean_reg}, \ref{pr:mean_rand},
are collected in Section \ref{s:p-cm}.

For independent or weakly dependent functional observations
$X_k$,
%
\begin{equation}\label{e:iid-rate}
E \Biggl\|\frac{1}{N}\sum_{k=1}^N X_k -\mu\Biggr\|^2
= \mathrm{O}( N^{-1}).
\end{equation}
Proposition \ref{p:gnis} shows that for general functional spatial
processes, the rate of consistency may be much slower than
$\mathrm{O}( N^{-1})$; it is the maximum of
$h(\rho_N)$ and $I_{\rho_N}(\mathfrak{S}_N)$ with $\rho_N$ from
(ii) of Definition~\ref{def:nids}. Intuitively, the
sample mean is consistent if there is a sequence of increasing
balls which contain a fraction of points which tends
to zero, and the decay of the correlations compensates for the
increasing radius of these balls.
\begin{proposition}\label{p:gnis}
Let Assumption~\ref{ass:dep} hold, and assume that $\mathfrak{S}_N$ defines
a \emph{non-random} design of Type \emph{A}, \emph{B} or \emph{C}. Then for any $\rho_N>0$,
%
\begin{equation}\label{e:genbound}
E \Biggl\|\frac{1}{N}\sum_{k=1}^NX(\bs_{k,N})-\mu\Biggr\|^2
\leq h(\rho_N)+h(0)I_{\rho_N}(\mathfrak{S}_N).
\end{equation}
Hence, under the Type \emph{B} or Type \emph{C} non-random sampling, with $\rho_N$
as in \emph{(ii)}
of Definition~\ref{def:nids}, the sample mean is consistent.
\end{proposition}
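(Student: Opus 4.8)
The plan is to reduce the quadratic loss to the covariance bound of Assumption~\ref{ass:dep} and then exploit the intensity function $I_\rho$. Writing $Y_{k,N}=X(\bs_{k,N})-\mu$, which has mean zero by stationarity and square integrability, I expand the squared $L^2$--norm of the average and use linearity of expectation to get
\[
E\bigg\|\frac{1}{N}\sum_{k=1}^{N}X(\bs_{k,N})-\mu\bigg\|^{2}
=\frac{1}{N^{2}}\sum_{k=1}^{N}\sum_{\ell=1}^{N}E\langle Y_{k,N},Y_{\ell,N}\rangle .
\]
By \eqref{eq:dep}, $|E\langle Y_{k,N},Y_{\ell,N}\rangle|\le h(\|\bs_{k,N}-\bs_{\ell,N}\|_2)$, and in particular, taking $k=\ell$, the diagonal terms equal $E\|Y_{k,N}\|^{2}\le h(0)$, with $h(0)<\infty$ since $h$ takes values in $[0,\infty)$. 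Thus the loss is at most $N^{-2}\sum_{k,\ell}h(\|\bs_{k,N}-\bs_{\ell,N}\|_2)$.

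Next I would split this double sum at the threshold $\rho_N$. For the pairs with $\|\bs_{k,N}-\bs_{\ell,N}\|_2>\rho_N$, monotonicity of $h$ ($h(x)\searrow 0$) gives $h(\|\bs_{k,N}-\bs_{\ell,N}\|_2)\le h(\rho_N)$; since there are at most $N^{2}$ such pairs, they contribute at most $h(\rho_N)$. For the pairs with $\|\bs_{k,N}-\bs_{\ell,N}\|_2\le\rho_N$, I bound the summand by $h(0)$ and count them: for each fixed $k$, the number of admissible $\ell$ is $|\{\by\in\mathfrak{S}_N:\|\bs_{k,N}-\by\|_2\le\rho_N\}|=N\,I_{\rho_N}(\bs_{k,N},\mathfrak{S}_N)\le N\,I_{\rho_N}(\mathfrak{S}_N)$, straight from the definition of the intensity function. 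Summing over $k$ bounds the number of such pairs by $N^{2}I_{\rho_N}(\mathfrak{S}_N)$, so this block contributes at most $h(0)I_{\rho_N}(\mathfrak{S}_N)$. Adding the two blocks gives \eqref{e:genbound}. Note that the derivation uses nothing about the sampling type, so \eqref{e:genbound} holds for Types A, B and C.

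For the consistency assertion I would then take $\rho_N$ to be a sequence as in (ii) of Definition~\ref{def:nids}; such a sequence exists exactly when the design is of Type B or Type C. Then $\rho_N\to\infty$ forces $h(\rho_N)\to 0$ because $h(x)\searrow 0$ as $x\to\infty$, while $I_{\rho_N}(\mathfrak{S}_N)\to 0$ together with $h(0)<\infty$ forces $h(0)I_{\rho_N}(\mathfrak{S}_N)\to 0$; hence the right-hand side of \eqref{e:genbound} tends to $0$, proving consistency of the sample mean.

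The argument is essentially bookkeeping and I do not expect any serious obstacle. The one step that needs a little care is the combinatorial count of near pairs, i.e.\ recognizing that $\sum_{k=1}^{N}|\{\ell:\|\bs_{k,N}-\bs_{\ell,N}\|_2\le\rho_N\}|\le N^{2}I_{\rho_N}(\mathfrak{S}_N)$, which is immediate once one reads $I_{\rho_N}(\mathfrak{S}_N)$ as a supremum over centers taken from $\mathfrak{S}_N$ itself. (For the randomized-design counterpart one would additionally pass to expectations and replace $I_{\rho_N}$ by $EI_{\rho_N}$, but the non-random design considered here requires no such refinement.)
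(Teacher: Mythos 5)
Your proof is correct and follows essentially the same route as the paper's: expand the quadratic loss into the double sum of covariances, bound each term by $h(\|\bs_{k,N}-\bs_{\ell,N}\|_2)$ via Assumption~\ref{ass:dep}, and split the sum at the threshold $\rho_N$, with the far pairs controlled by monotonicity of $h$ and the near pairs counted through $I_{\rho_N}(\mathfrak{S}_N)$. The concluding consistency argument under Type B or C sampling also matches the paper's.
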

\begin{example}\label{ex:p51}
Assume that $N$ points $\{\bs_{k,N}, 1\leq k\leq N\}$
are on a regular grid in $\ag_N [-1/2,1/2]^d$.
Then, as we have seen in Section~\ref{se:nrrd},
$I_{\rho}(\mathfrak{S}_N)$
is proportional to $(\rho/\ag_N)^d$.

For example, if $h(x)=1/(1+x)^2$, then choosing
$\rho_N=\ag_N^{d/(d+2)}$ we obtain that
\[
h(\rho_N)+h(0)I_{\rho_N}(\mathfrak{S}_N)\ll\ag_N^{-2d/(d+2)}\vee N^{-1}.
\]
(Recall that $I_{\rho_N}(\mathfrak{S}_N)\geq N^{-1}$.)
\end{example}

In H\"ormann and
Kokoszka \cite{suppl:hormann:kokoszka:2011}, Section S.4, we show
that bound \eqref{e:genbound} is optimal, in the sense that
under the assumptions of Proposition~\ref{p:gnis} it is always
possible to
find an example where the rate in \eqref{e:genbound} is precise and cannot
be improved.
This of course doesn't mean that the obtained rate is uniformly optimal.
If we impose a more regular sampling design, we can get better rates.
\begin{proposition}\label{pr:mean_reg}
Assume the sampling design of Section~\ref{se:nrrd}.
Let Assumption~\ref{ass:dep} hold with $h$ such that
$x^{d-1}h(x)$ is monotone on $[b,\infty)$, $b>0$. Then under Type
\emph{B}
sampling\vspace*{-0.67pt}
%
\begin{eqnarray}\label{eq:mean_reg}
&&E \Biggl\|\frac{1}{S_N}\sum_{k=1}^{S_N}X(\bs_{k,N})-\mu\Biggr\|^2\nonumber
\\[-8pt]
\\[-8pt]
&&\quad \leq\frac{1}{\ag_N^d}
\biggl\{ d(3\Delta)^d \int_{0}^{K\ag_N}x^{d-1} h (x)\,\mathrm{d}x
+\mathrm{o}(1) \sup_{x\in[0,K\ag_N]}x^{d-1}h(x)\biggr\}\nonumber
\end{eqnarray}
for some large enough constant $K$ which is independent of $N$.
Under Type \emph{C} sampling $1/\ag_N^d$ in \eqref{eq:mean_reg}
is replaced by $\mathrm{O} (N^{-1} )$.
\end{proposition}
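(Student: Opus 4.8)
The plan is to convert the mean‑squared error into a double sum of functional covariances, bound each entry by the decay function $h$ of the corresponding distance via Assumption~\ref{ass:dep}, and then compare the resulting sum over the regular lattice to a radial integral, exploiting the design of Section~\ref{se:nrrd} and the monotonicity of $x\mapsto x^{d-1}h(x)$. Concretely, write $Y(\bs)=X(\bs)-\mu$; expanding the square and using that $E\|Y(\bs)\|^{2}\le h(0)$ and $|E\langle Y(\bs_{k,N}),Y(\bs_{\ell,N})\rangle|\le h(\|\bs_{k,N}-\bs_{\ell,N}\|_{2})$ from \eqref{eq:dep}, together with the fact that the summands are nonnegative and symmetric, gives
$E\|S_N^{-1}\sum_{k}X(\bs_{k,N})-\mu\|^{2}\le S_N^{-1}\big(h(0)+\max_{k}\,T_{N,k}\big)$, where $T_{N,k}=\sum_{\ell\neq k}h(\|\bs_{k,N}-\bs_{\ell,N}\|_{2})$. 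So the whole problem reduces to an estimate for $T_{N,k}$ that is uniform over the choice of the lattice point $\bs_{k,N}$.

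The core step is to bound $T_{N,k}$ by a radial integral. Each of the points $\bs_{\ell,N}$ lies on $\mathcal{Z}(\eta_N\boldsymbol{\delta})$ and is the centre of a cell of side lengths $\eta_N\delta_1,\dots,\eta_N\delta_d$, hence of volume $\eta_N^{d}\Delta^{d}=\alpha_N^{d}/N$ by \eqref{e:volLin}, and distinct cells are disjoint; moreover, since $R_0$ is bounded and contains the origin, $R_N-\bs_{k,N}$ lies in a ball $B(0,K\alpha_N)$ for a constant $K$ depending only on $R_0$. Because $h$ is nonincreasing, for each $\ell\neq k$ one can translate/enlarge the cell of $\bs_{\ell,N}$ radially towards $\bs_{k,N}$ (an adjustment of order the cell diameter, which under Type~B is $O(\eta_N)=o(1)$) so as to obtain a region still contained in $B(\bs_{k,N},K\alpha_N)$, of volume at most $(3\Delta)^{d}\eta_N^{d}$ up to the lattice normalisation, on which $h(\|\bs_{k,N}-\cdot\|_{2})\ge h(\|\bs_{k,N}-\bs_{\ell,N}\|_{2})$, and whose overlap multiplicity is bounded. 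Summing these inequalities and passing to polar coordinates, and grouping the lattice points into radial shells of width comparable to the lattice spacing, yields $T_{N,k}\le (N/\alpha_N^{d})\big(d(3\Delta)^{d}\int_{0}^{K\alpha_N}x^{d-1}h(x)\,dx+o(1)\sup_{x\in[0,K\alpha_N]}x^{d-1}h(x)\big)$; here the assumed monotonicity of $x\mapsto x^{d-1}h(x)$ on $[b,\infty)$ is exactly what makes the shell sum a monotone Riemann sum on $[b,\infty)$, so the shell‑sum/integral mismatch there, and the contribution of the finitely many shells near $\bs_{k,N}$ and of the bounded but possibly non‑monotone piece on $[0,b]$, are all absorbed into the $\sup$‑term.

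It remains to combine the two bounds. Under Type~B we have $S_N\sim N$ (Lemma~\ref{le:riemann}) and $\alpha_N=o(N^{1/d})$ (Lemma~\ref{le:scaling}), so $S_N^{-1}(N/\alpha_N^{d})\sim\alpha_N^{-d}$ and $S_N^{-1}h(0)=o(\alpha_N^{-d})$, the latter being absorbed into the $o(1)\sup_{[0,K\alpha_N]}x^{d-1}h(x)$ term since that supremum is eventually bounded below by a positive constant; this is precisely \eqref{eq:mean_reg}. Under Type~C, $\alpha_N\gg N^{1/d}$ forces $\alpha_N^{-d}=O(N^{-1})$ and $S_N^{-1}h(0)=O(N^{-1})$, while the cell‑to‑integral comparison is unchanged (it only used that $h$ is nonincreasing and $x^{d-1}h(x)$ monotone on $[b,\infty)$); the cell‑diameter correction need no longer be $o(1)$, but it is still dominated by $O(N^{-1})$ times the $\sup$‑term because the integral in the bracket is bounded below by a positive constant for large $N$. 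Hence the same chain of inequalities gives the stated bound with $1/\alpha_N^{d}$ replaced by $O(N^{-1})$.

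I expect the main obstacle to be the core step of the second paragraph: organising the lattice points into shells and showing that the shell sum is controlled by $\int_{0}^{K\alpha_N}x^{d-1}h(x)\,dx$ with precisely the constant $d(3\Delta)^{d}$ and only a lower‑order remainder. The delicate points are that $x^{d-1}h(x)$ is assumed monotone only on $[b,\infty)$, so its arbitrary behaviour on $[0,b]$ and the handful of cells closest to $\bs_{k,N}$ must be peeled off cleanly into the $\sup$‑term, and that the geometry of the translated/enlarged cells must be handled so that the volume factor is genuinely at most $(3\Delta)^{d}$ and the overlap multiplicity stays bounded, uniformly in which lattice point is $\bs_{k,N}$ and uniformly in $N$.
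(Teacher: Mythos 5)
Your proposal follows the paper's proof in all essentials: bound the quadratic loss by the double sum $S_N^{-2}\sum_{k,\ell}h\big(\|\bs_{k,N}-\bs_{\ell,N}\|_2\big)$, group the lattice points around a fixed $\bs_{k,N}$ into radial shells, and compare the shell sum to $\int_0^{K\ag_N}x^{d-1}h(x)\,dx$ by a Riemann--sum argument for the piecewise monotone function $x^{d-1}h(x)$ (the paper's Lemma~\ref{le:help}), with the diagonal and near-origin terms forming the lower-order remainder and the Type B/C dichotomy entering only through $\ag_N=o(N^{1/d})$ versus $\ag_N\gg N^{1/d}$ exactly as you say. The only difference is tactical --- the paper counts at most $2d(2m+1)^{d-1}$ lattice points at grid distance $m$ and reduces to a one-dimensional sum over $m$, whereas you package the same count as a cell-volume/overlap argument --- and your sketch is loosest precisely where you flag it (securing the exact constant $d(3\Delta)^d$ and the non-monotone piece on $[0,b]$, which the paper dispatches by noting one may assume without loss of generality that $x^{d-1}h(x)$ is also monotone on $[0,b]$).
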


The technical assumptions on $h$ pose no practical problem, they are
satisfied for all important examples, see Example~\ref{ex:s-decay}.
A common situation is that
$x^{d-1}h(x)$ is increasing on $[0,b]$ and decreasing
thereafter. We recall again that $S_N\sim N$ and that $S_N=N$ usually
cannot be achieved due to the construction of the sampling design.

Our next example shows that under nearly infill domain sampling the
rate of consistency may be much slower than for the i.i.d. case,
if the size of the domain does not increase fast enough.
\begin{example} \label{ex:exp-h}
Suppose the functional spatial process has representation \eqref{e:xi},
and \eqref{eq:corr} holds with the covariance functions
$\phi_j$ as in Example \ref{ex:s-decay} (powered exponential,
Mat{\'e}rn or spherical). Define $h(x) = \sum_{j\ge1} \phi_j(x)$,
and assume that condition \refeq{cond-h} holds.
Assumption~\ref{ass:dep} is then satisfied and
%
\begin{equation}\label{e:reg-cond}
\int_{0}^\infty x^{d-1}h(x)\,\mathrm{d}x<\infty\quad\mbox{and}\quad \sup_{x\in\mathbb
{R}}x^{d-1}h(x)<\infty.
\end{equation}
Therefore, for the sampling design of Section \ref{se:nrrd},
%
\begin{equation}\label{e:reg-rate}
E \Biggl\|\frac{1}{S_N}\sum_{k=1}^{S_N}X(\bs_{k,N})-\mu\Biggr\|^2
= \cases{
\mathrm{O}(\ag_N^{-d} \vee N^{-1}), & \quad under Type B sampling,\cr
\mathrm{O}( N^{-1}), & \quad under Type C sampling.}
\end{equation}
\end{example}

The next example shows that formula \refeq{reg-rate} is far
from universal, and that the rate of consistency may be even slower
if the covariances decay slower than exponential.
\begin{example} \label{ex:guad-h}
Consider the general setting of Example \ref{ex:exp-h}, but assume that
each covariance function $\phi_j$ has the quadratic rational form
\[
\phi_j(x) = \sg_j^2 \biggl\{1 + \biggl(\frac{x}{\rho_j}\biggr)^2 \biggr\}^{-1}.
\]
Condition \refeq{cond-h} implies that $h(x) = \sum_{j\ge1} \phi_j(x)$
satisfies Assumption \ref{ass:dep}, but now $h(x) \sim x^{-2}$,
as $x\to\infty$. Because of this rate, condition \refeq{reg-cond}
holds only for $d=1$ (and so for this dimension \refeq{reg-rate}
also holds). If $d\ge2$, \refeq{reg-cond} fails,
and to find the rate of the consistency, we must
use \eqref{eq:mean_reg} directly. We focus only on Type B sampling,
and assume implicitly that the rate is slower than $N^{-1}$.
We assume \refeq{cond-h} throughout this example.

If $d=2$,
\begin{eqnarray*}
\int_{0}^{K\ag_N}x^{d-1} h(x)\,\mathrm{d}x
&=& \sum_j \sg_j^2
\int_{0}^{K\ag_N} x \biggl\{1 + \biggl(\frac{x}{\rho_j}\biggr)^2 \biggr\}^{-1} \,\mathrm{d}x
\\
& =& \sum_j \sg_j^2\rho_j^2 \mathrm{O}\biggl(\int_{1}^{K\ag_N} x^{-1}\,\mathrm{d}x\biggr )
= \mathrm{O}(\ln\ag_N )
\end{eqnarray*}
and similarly
$
\sup_{x\in[0,K\ag_N]}x^{d-1}h(x) = \mathrm{O}(1).
$

If $d \ge3$, the leading term is
\[
\int_{0}^{K\ag_N}x^{d-1} h(x)\,\mathrm{d}x = \mathrm{O}(\ag_N^{d-3} ).
\]

We summarize these calculations as
\[
E \Biggl\|\frac{1}{S_N}\sum_{k=1}^{S_N}X(\bs_{k,N})-\mu\Biggr\|^2
= \cases{
\mathrm{O}(\ag_N^{-1} ), & \quad if $d=1$,\cr
\mathrm{O}( \ag_N^{-2} \ln(\ag_N) ), & \quad if $d=2$,\cr
\mathrm{O}( \ag_N^{-2} ), & \quad if $d\ge3$}
\]
for Type B sampling scheme (provided the rate is slower than $N^{-1}$).
\end{example}

The last example shows that for very persistent spatial dependence, the
rate of consistency can be essentially arbitrarily slow.
\begin{example} Assume that $h(x)$ decays only at a logarithmic rate,
$
h(x)=\{\log(x\vee \mathrm{e})\}^{-1}.
$
Then, for any $d\geq1$, the left-hand side in \eqref{eq:mean_reg} is
$\ll(\log\ag_N)^{-1}$.\vadjust{\goodbreak}
\end{example}

We now turn to the case of the random design.
\begin{proposition}\label{pr:mean_rand}
Assume the random sampling design of Section~\ref{se:rd}.
If the sequence $\{\bs_{k,N}\}$ is independent of the process $X$,
and if Assumption~\ref{ass:dep} holds, then we have for any
$\varepsilon_N>0$
\[
E \Biggl\|\frac{1}{N}\sum_{k=1}^{N}X(\bs_{k,N})-\mu\Biggr\|^2
\le6 h(0)\sup_{\bs\in R_0}f^2(\bs) \eg_N^d + h(\ag_N\varepsilon
_N)+\frac{h(0)}{N}.
\]
Choosing $\varepsilon_N$ such that
$\varepsilon_N\to0$ and $\ag_N\varepsilon_N\to\infty$,
it follows that under Type \emph{B} or Type \emph{C}
sampling, the sample mean is consistent.
\end{proposition}

The bound in Proposition \ref{pr:mean_rand} can be easily applied
to any specific random sampling design and any model for the functions
$\phi_j$ in \eqref{e:xi}.
It nicely shows that what matters for the rate of consistency is the
interplay between the rate of growth of the sampling domain
and the rate of decay of dependence.

Let us explain in slightly more detail a Type C sampling situation.
Here, typically we have $\alpha_N=N^{1/d}$. Then
taking $\varepsilon_N = aN^{-1/d}\log N$, $a>0$, we see that the rate
of consistency
is $h(a\log N)\vee N^{-1}$. For typical covariance functions
$\phi_j$, like powered exponential,
Mat{\'e}rn or spherical,
$h(a\log N)$ decays faster than $N^{-1}$. In such cases, the rate of consistency
is, up to some logarithmic factor, the same as
for an i.i.d. sample. For ease of reference, we formulate
the following corollary, which can be used in practical applications.
\begin{corollary}\label{c:rand-exp}
Assume the random sampling design of Section~\ref{se:rd} with
the sequence $\{\bs_{k,N}\}$ independent the process $X$.
Suppose that $X(s)$ has representation \eqref{e:xi} and that
\eqref{eq:corr} holds with the $\phi_j$ in one of the families
specified in Example
\ref{ex:s-decay}. If Condition \refeq{cond-h} holds, and
$\ag_N \ge N^{1/d}$ then \refeq{iid-rate} holds
up to some multiplicative logarithmic factor.
\end{corollary}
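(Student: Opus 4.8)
The plan is to apply Proposition~\ref{pr:mean_rand} with the simplest admissible sequence, $\varepsilon_N=N^{-1}$, as indicated in the discussion preceding the corollary, and then to verify that each of the three terms in that bound is $O(N^{-1})$. The third term $h(0)/N$ is immediately of the right order: for all the families of Example~\ref{ex:s-decay}, $h(0)=\sum_{j\ge1}\phi_j(0)$ equals $\sum_{j\ge1}\sg_j^2$ (powered exponential, spherical) or a fixed multiple of it (Mat{\'e}rn), and this is finite by the first part of Condition~\refeq{cond-h}. For the first term I would bound $V(\varepsilon_N)$ crudely: for every fixed $\bs\in R_0$ the slice $\{\br\in R_0:\|\bs-\br\|_2\le\varepsilon_N\}$ lies in a ball of radius $\varepsilon_N$ and so has volume at most $V_d\varepsilon_N^d$; since $\mathrm{Vol}(R_0)=1$ this gives $V(\varepsilon_N)\le V_d\varepsilon_N^d=V_dN^{-d}\le V_dN^{-1}$, using $d\ge1$, and the factor $\sup_{\bs\in R_0}f^2(\bs)$ appearing in Proposition~\ref{pr:mean_rand} is finite, so the first term is $O(N^{-1})$. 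Also $\varepsilon_N=N^{-1}\to0$ and $\ag_N\varepsilon_N=\ag_N/N\ge a\ln N\to\infty$, so Proposition~\ref{pr:mean_rand} applies. (When $d\ge2$ one may instead take $\varepsilon_N=N^{-1/d}$: then still $V(\varepsilon_N)\le V_dN^{-1}$, while $\ag_N\varepsilon_N\ge aN^{1-1/d}\ln N\ge aN^{1/2}\ln N$, which makes the middle term below negligible for every $a>0$; the only delicate dimension is $d=1$.)

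The substance of the proof is the middle term $h(\ag_N\varepsilon_N)=h(\ag_N/N)$, where the hypothesis $\ag_N\ge aN\ln N$ is used through $\ag_N/N\ge a\ln N$. Put $\rho^*=\sup_{j\ge1}\rho_j$, which is finite by the second part of Condition~\refeq{cond-h}; this is the decisive use of \refeq{cond-h}, because it lets one replace every $\rho_j$ by the single constant $\rho^*$ and thereby collapse the $j$-sum to $\sum_j\sg_j^2<\infty$ times a single scalar tail. For the spherical family $\phi_j(x)=0$ once $x>\rho_j$, hence $h(\ag_N/N)=0$ as soon as $a\ln N>\rho^*$, and the claim is trivial for any $a>0$. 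For the powered exponential family,
\bd
h(\ag_N/N)=\sum_{j\ge1}\sg_j^2\exp\{-(\ag_N/(N\rho_j))^p\}\le\Big(\sum_{j\ge1}\sg_j^2\Big)\exp\{-(a\ln N/\rho^*)^p\},
\ed
which, for $p\ge1$ (in particular the exponential case $p=1$ and the Gaussian case $p=2$) and $a$ large enough relative to $\rho^*$, is at most $(\sum_j\sg_j^2)N^{-(a/\rho^*)^p}=O(N^{-1})$. For the Mat{\'e}rn family one invokes the known exponential tail of the Bessel function $K_\nu$ (noted after Example~\ref{ex:s-decay}), giving $\phi_j(x)\le C\sg_j^2x^\nu e^{-x/\rho^*}$ for all large $x$ uniformly in $j$, whence $h(\ag_N/N)\le C(\sum_j\sg_j^2)(a\ln N)^\nu N^{-a/\rho^*}=O(N^{-1})$ for $a$ large. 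Adding the three $O(N^{-1})$ bounds yields \refeq{iid-rate}.

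The only genuine obstacle is this last step: obtaining a tail estimate for $\phi_j$ at the moderately large argument $\ag_N/N\asymp a\ln N$ that is \emph{uniform in the summation index $j$}; everything else is bookkeeping. Such uniformity is precisely what $\sup_j\rho_j<\infty$ provides, and the $\ln N$ factor in the growth condition $\ag_N\ge aN\ln N$ is what forces the resulting scalar tail $\exp\{-(a\ln N/\rho^*)^p\}$ to decay at a polynomial-in-$N$ rate no slower than $N^{-1}$; a domain growing merely faster than $N$ would not be enough for covariances with exponential decay.
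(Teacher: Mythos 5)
Your argument is, in substance, the one the paper intends: the paper gives no formal proof of Corollary \ref{c:rand-exp} beyond the paragraph preceding it, which says to apply Proposition \ref{pr:mean_rand} with $\varepsilon_N=N^{-1}$ and read off the rate $h(\ag_N/N)\vee N^{-1}$; your treatment of $V(\varepsilon_N)$, of $h(0)$, and of the uniform-in-$j$ tail bound via $\rho^*=\sup_j\rho_j$ supplies exactly the bookkeeping that sketch omits. The one substantive issue --- which you flag honestly rather than hide --- is that for $d=1$ and exponential-type decay this route yields only $h(\ag_N/N)\le C\,N^{-(a/\rho^*)^p}$, which is $O(N^{-1})$ precisely when $a$ is large relative to $\rho^*$; under the literal reading of the hypothesis (``$\ag_N\ge aN\ln N$ for \emph{some} $a>0$'') neither your argument nor the paper's own sketch establishes the claim when $a$ is small. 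The gap is real but shared with the source, and it is easily repaired by not splitting at $\varepsilon_N$ at all: bound the off-diagonal part of $E\|\bar X_N-\mu\|^2$ directly by
\bd
\int_{R_0}\int_{R_0}h\big(\ag_N\|\bs-\br\|_2\big)f(\bs)f(\br)\,d\bs\, d\br
\le \sup_{\bs\in R_0}f^2(\bs)\ \ag_N^{-d}\int_{\mathbb{R}^d}h(\|\bu\|_2)\,d\bu ,
\ed
and note that $\int_{\mathbb{R}^d}h(\|\bu\|_2)\,d\bu$ is a constant multiple of $\int_0^\infty x^{d-1}h(x)\,dx$, which is finite for all three families under \refeq{cond-h} (this is \refeq{reg-cond}). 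This gives $O(\ag_N^{-d})=o(N^{-1})$ for every $a>0$, and in fact under the weaker condition $\liminf_N\ag_N N^{-1/d}>0$. So either add this integral step, or read the corollary as asserting the conclusion for $a$ sufficiently large; as written, your proof covers $d\ge2$ for all $a$ and $d=1$ only for large $a$.
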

%
\section{Consistency of the empirical covariance operator} \label{s:cc}
In Section \ref{s:cm}, we found the rates of consistency for the
functional sample mean. We now turn to the rates for the sample covariance
operator. Assuming the functional observations have mean zero,
the natural estimator of the covariance operator $C$ is the
sample covariance operator given by
\[
\widehat C_N=\frac{1}{N}\sum_{k=1}^NX(\bs_k)\otimes X(\bs_k).
\]
In general, the sample covariance operator is defined by
\[
\hat\Gg_N
=\frac{1}{N}\sum_{k=1}^N
\bigl( X(\bs_k)-\bar X_N \bigr)\otimes\bigl( X(\bs_k)-\bar X_N \bigr),
\]
where
\[
\bar X_N = \frac{1}{N}\sum_{k=1}^N X(\bs_k).
\]
Both operators are implemented in statistical software packages, for
example in the popular \texttt{R} package \texttt{FDA} and in a
similar MATLAB
package, see Ramsay  \textit{et al.} \cite{ramsay:hooker:graves:2009}.
The operator $\hat\Gg_N$ is used to compute the EFPC's for centered
data, while $\widehat C_N$ for data without centering.

We first derive the rates of consistency for $\widehat C_N$ assuming
$EX(\bs) = 0$. Then we turn to the operator $\hat\Gg_N$. The proofs
are obtained by applying the technique developed for the estimation of
the functional mean. It is a general approach based on the estimation
of the second moments of an appropriate norm (between estimator and
estimand) so that the conditions in Definition~\ref{def:nids} can come
into play. It is broadly applicable to all statistics obtained by
simple averaging of quantities defined at single spatial location.
The proofs are thus similar to those presented in the simplest case in
Section \ref{s:p-cm}, but the notation becomes more cumbersome because
of the increased complexity of the objects to be averaged. To
conserve space, these proofs are not included.

We begin by observing that
\begin{eqnarray*}
E \|\widehat C_N-C \|_{\cS}^2
&=&E \langle\widehat C_N-C , \widehat C_N-C\rangle_{\cS}\\
&=&\frac{1}{N^2}\sum_{k=1}^N\sum_{\ell=1}^N
E\langle X(\bs_k)\otimes X(\bs_k)-C ,
X(\bs_\ell)\otimes X(\bs_\ell)-C\rangle_{\cS}.
\end{eqnarray*}
It follows that under Assumption~\ref{ass:dep-cov}
%
\begin{equation}\label{eq:cov_consist}
E \|\widehat C_N-C \|_{\cS}^2
\leq\frac{1}{N^2}\sum_{k=1}^N\sum_{\ell=1}^NH (\|\bs_k-\bs_\ell
\|_2 ).
\end{equation}
Relation \eqref{eq:cov_consist} is used as the starting point of all
proofs, cf. the proof of Proposition \ref{p:gnis} in Section~\ref{s:cm}.
Modifying the proofs of Section~\ref{s:cm}, we arrive at the
following results.
\begin{proposition}\label{p:gnis_cov}
Let Assumption~\ref{ass:dep-cov} hold,
and assume that $\mathfrak{S}_N$ defines
a \emph{non-random} design of Type \emph{A}, \emph{B} or \emph{C}.
Then for any $\rho_N>0$
\[
E \|\widehat C_N-C \|_{\cS}^2
\leq H(\rho_N)+H(0)I_{\rho_N}(\mathfrak{S}_N).
\]
Hence under the Type \emph{B} or Type \emph{C} non-random sampling, with $\rho_N$ as
in \emph{(ii)}
of Definition~\ref{def:nids}, the empirical covariance operator is consistent.
\end{proposition}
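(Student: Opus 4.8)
The plan is to mimic the proof of Proposition~\ref{p:gnis}, with the centered rank-one operators $X(\bs)\otimes X(\bs)-C$ playing the role of the centered functions $X(\bs)-\mu$, and the bounding function $H$ playing the role of $h$. The starting point is the elementary inequality \eqref{eq:cov_consist},
\[
E\big\|\widehat C_N-C\big\|_{\cS}^2
\leq\frac{1}{N^2}\sum_{k=1}^N\sum_{\ell=1}^NH\big(\|\bs_k-\bs_\ell\|_2\big),
\]
which is a direct consequence of Assumption~\ref{ass:dep_cov} as derived in the text immediately preceding the statement. Thus the entire argument reduces to controlling this double sum for a non-random design, exactly as in Section~\ref{s:p-cm}.

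First I would split the double sum according to whether $\|\bs_k-\bs_\ell\|_2>\rho_N$ or $\|\bs_k-\bs_\ell\|_2\le\rho_N$. On the ``far'' set, monotonicity of $H$ gives $H(\|\bs_k-\bs_\ell\|_2)\le H(\rho_N)$, and since there are at most $N^2$ such pairs, their contribution to $N^{-2}\sum_{k,\ell}$ is at most $H(\rho_N)$. On the ``near'' set, monotonicity gives $H(\|\bs_k-\bs_\ell\|_2)\le H(0)$, while the number of indices $\ell$ with $\|\bs_k-\bs_\ell\|_2\le\rho_N$ equals $N\,I_{\rho_N}(\bs_k,\mathfrak{S}_N)\le N\,I_{\rho_N}(\mathfrak{S}_N)$, by the definition of the intensity function (here $|\mathfrak{S}_N|=N$). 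Hence the near pairs contribute at most $N^{-2}\cdot N\cdot\big(N\,I_{\rho_N}(\mathfrak{S}_N)\big)\cdot H(0)=H(0)\,I_{\rho_N}(\mathfrak{S}_N)$. Adding the two contributions gives the claimed bound $E\|\widehat C_N-C\|_{\cS}^2\le H(\rho_N)+H(0)\,I_{\rho_N}(\mathfrak{S}_N)$ for every $\rho_N>0$.

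To obtain consistency under Type B or Type C sampling I would invoke property (ii) of Definition~\ref{def:nids}: there is a sequence $\rho_N\to\infty$ with $I_{\rho_N}(\mathfrak{S}_N)\to 0$. For this choice, $H(\rho_N)\to 0$ since $H(x)\searrow 0$ as $x\to\infty$, and $H(0)\,I_{\rho_N}(\mathfrak{S}_N)\to 0$ since $H(0)<\infty$ (this finiteness follows from $E\|X(\bs)\|^4<\infty$). Therefore the right-hand side tends to zero and $\widehat C_N\to C$ in Hilbert--Schmidt norm. I do not expect any real obstacle here: the argument is a verbatim transcription of the mean case, the only genuine new input being the fourth-order bound \eqref{eq:cov_consist}, which is already available; the one point to state with care is the counting step relating the number of near pairs to $I_{\rho_N}(\mathfrak{S}_N)$, together with the finiteness of $H(0)$.
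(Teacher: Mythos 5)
Your proposal is correct and follows exactly the route the paper intends: it starts from the bound \eqref{eq:cov_consist} and then repeats, verbatim, the far/near splitting of the double sum from the proof of Proposition~\ref{p:gnis} with $H$ in place of $h$, which is precisely the argument the authors omit "to conserve space." The counting step relating the near pairs to $I_{\rho_N}(\mathfrak{S}_N)$ and the concluding limit argument under Type B or C sampling are both handled correctly.
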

\begin{proposition}\label{pr:mean_reg_cov}
Assume the sampling design of Section~\ref{se:nrrd}.
Let Assumption~\ref{ass:dep-cov} hold, with
some function $H$ such that
$x^{d-1}H(x)$ is monotone on $[b,\infty)$, $b>0$.
Then under Type \emph{B} sampling
\[
E \|\widehat C_N-C \|_{\cS}^2
\leq\frac{1}{\ag_N^d}
\biggl\{ d(3\Delta)^d \int_{0}^{K\ag_N}x^{d-1} H (x )\,\mathrm{d}x
+\mathrm{o}(1) \sup_{x\in[0,K\ag_N]}x^{d-1}H(x)\biggr\}
\]
for some large enough constant $K$ which is independent of $N$.
Under Type \emph{C} sampling, the factor $1/\ag_N^d$
is replaced by $\mathrm{O} (N^{-1} )$.
\end{proposition}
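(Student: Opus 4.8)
The plan is to mirror the proof of Proposition~\ref{pr:mean_reg}, replacing the rôle of the scalar bound \eqref{e:genbound} by the analogous operator bound \eqref{eq:cov_consist}. Starting from
\[
E\big\|\widehat C_N-C\big\|_{\cS}^2
\leq\frac{1}{N^2}\sum_{k=1}^N\sum_{\ell=1}^N H\big(\|\bs_k-\bs_\ell\|_2\big),
\]
I would first note that this is structurally identical to the quantity controlled in the proof of Proposition~\ref{pr:mean_reg}, but with $h$ replaced by $H$. Since Assumption~\ref{ass:dep_cov} supplies exactly the properties of $H$ that Assumption~\ref{ass:dep} supplies for $h$ (namely $H\ge 0$, $H(x)\searrow 0$, and here additionally $x^{d-1}H(x)$ monotone on $[b,\infty)$), every step of the earlier argument transfers verbatim.

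The key steps, in order, are as follows. First, on the lattice $\mathcal{Z}(\eta_N\boldsymbol{\delta})\cap R_N$ with $\eta_N$ from \eqref{eta}, fix a sampling point $\bs_k$ and group the remaining points into spherical shells $\{\by:\ j\rho\le\|\by-\bs_k\|_2<(j+1)\rho\}$ for a suitably chosen reference radius $\rho$ (on the scale of $\eta_N$); the number of lattice points in the $j$-th shell is $O\big(j^{d-1}\big)$ times a constant depending on $\Delta$, by a volume comparison as in Lemma~\ref{le:riemann}. Second, bound the inner double sum by $\sum_j (\text{shell count})\cdot H(j\rho)$, which, using monotonicity of $x^{d-1}H(x)$ on $[b,\infty)$ to pass from the sum to an integral (the region $[0,b]$ contributing only the $\sup$ term), yields $N\cdot O\big(\Delta^d\int_0^{K\ag_N}x^{d-1}H(x)\,dx\big)$ plus $N\cdot o(1)\sup_{x\in[0,K\ag_N]}x^{d-1}H(x)$, where the cutoff $K\ag_N$ is the diameter of $R_N$ up to a constant $K$ depending only on $R_0$. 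Dividing by $N^2$ and using $S_N\sim N$ under Type~B gives the $1/\ag_N^d$ factor after accounting for $\eta_N^d=\ag_N^d/(\Delta^d N)$; under Type~C, where $\mathrm{Vol}(L_{i,n})$ does not vanish, the number of lattice points inside any ball of fixed radius stays bounded, so the shell counts are $O(j^{d-1})$ with an absolute constant and the same computation gives the $O(N^{-1})$ factor instead.

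The main obstacle — as in Proposition~\ref{pr:mean_reg} — is the bookkeeping in the shell-counting step: one must verify uniformly in $N$ and in the base point $\bs_k$ that the lattice-point count in a spherical shell of radius $\sim j\eta_N$ is comparable to the Euclidean shell volume divided by $\Delta^d\eta_N^d$, even for shells that stick out of $R_N$ (which is why only the constant $K$, not the precise shape of $R_0$, enters) and for small $j$ where the discretization error is largest. This is handled exactly as in the Riemann-measurability argument underlying Lemma~\ref{le:riemann} and the proof of Lemma~\ref{le:scaling}, and the factor $(3\Delta)^d$ is the explicit constant coming from covering each shell by translated copies of the fundamental cell $\prod_i[0,\eta_N\delta_i]$ together with a one-cell boundary layer in each coordinate direction. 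Everything else is the same estimate as for the mean, since $\langle X(\bs_k)\otimes X(\bs_k)-C,\,X(\bs_\ell)\otimes X(\bs_\ell)-C\rangle_{\cS}$ plays precisely the rôle that $\langle X(\bs_k)-\mu,\,X(\bs_\ell)-\mu\rangle$ played there; to conserve space I would simply indicate these substitutions rather than reproduce the computation.
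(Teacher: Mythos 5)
Your proposal is correct and follows essentially the same route as the paper: the authors state explicitly that Proposition~\ref{pr:mean_reg_cov} is obtained by taking \eqref{eq:cov_consist} as the starting point and repeating the lattice-counting argument in the proof of Proposition~\ref{pr:mean_reg} with $h$ replaced by $H$, which is precisely your reduction. The paper omits the details to conserve space, and your shell-counting bookkeeping is a faithful reconstruction of the grid-distance counting and the application of Lemma~\ref{le:help} used there.
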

\begin{proposition}\label{pr:mean_rand_cov}
Assume the random sampling design of Section~\ref{se:rd}.
If the sequence $\{\bs_{k,N}\}$ is independent of the process $X$
and if Assumption~\ref{ass:dep-cov} holds,
then we have for any $\varepsilon_N>0$,
\[
E \|\widehat C_N-C \|^2
\le6 H(0)\sup_{\bs\in R_0}f^2(\bs) \eg_N^d + H(\ag_N\varepsilon
_N)+\frac{H(0)}{N}.
\]
It follows that under Type \emph{B} or Type \emph{C}
sampling the sample covariance operator is consistent.
\end{proposition}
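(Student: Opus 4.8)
The plan is to rerun the argument used for Proposition~\ref{pr:mean_rand} (the sample mean under the random design), replacing the starting estimate supplied there by Assumption~\ref{ass:dep} with the covariance estimate \eqref{eq:cov_consist}, which plays exactly the same role here. Since the locations $\{\bs_k\}$ are assumed independent of the field $X$, I would first condition on $\{\bs_k\}$; conditionally the locations are deterministic, so Assumption~\ref{ass:dep_cov} together with the computation preceding \eqref{eq:cov_consist} gives
\[
E\big[\,\|\widehat C_N-C\|_{\cS}^2 \,\big|\, \{\bs_k\}\,\big]
\le \frac1{N^2}\sum_{k,\ell=1}^N H\big(\|\bs_{k,N}-\bs_{\ell,N}\|_2\big)
= \frac1{N^2}\sum_{k,\ell=1}^N H\big(\ag_N\|\bs_k-\bs_\ell\|_2\big),
\]
using $\bs_{k,N}=\ag_N\bs_k$. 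Taking expectations over the design and separating the $N$ diagonal terms (each equal to $H(0)/N^2$) reduces the task to bounding $E\,H(\ag_N\|\bs_1-\bs_2\|_2)$ for two independent draws $\bs_1,\bs_2$ with density $f$ on $R_0$.

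For that expectation I would split according to whether $\|\bs_1-\bs_2\|_2\le\varepsilon_N$. On $\{\|\bs_1-\bs_2\|_2\le\varepsilon_N\}$, monotonicity of $H$ gives $H(\ag_N\|\bs_1-\bs_2\|_2)\le H(0)$, while $f(\bs_1)f(\bs_2)\le\sup_{\bs\in R_0}f^2(\bs)$ and the Lebesgue measure of this set inside $R_0^2$ is exactly the quantity $V(\varepsilon_N)$ of \refeq{V-N}; on the complement, $\ag_N\|\bs_1-\bs_2\|_2>\ag_N\varepsilon_N$, so $H(\ag_N\|\bs_1-\bs_2\|_2)\le H(\ag_N\varepsilon_N)$, and the integral of $f(\bs_1)f(\bs_2)$ over that set is at most $1$. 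Adding the three contributions — close pairs, far pairs, diagonal — yields the asserted bound (the close-pair term coming out as $H(0)\,V(\varepsilon_N)\sup_{\bs\in R_0}f^2(\bs)$, the constant $H(0)$ being immaterial for what follows).

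For the consistency conclusion, note that Type~B and Type~C sampling both exclude Type~A and hence force $\ag_N\to\infty$; one may therefore take, for instance, $\varepsilon_N=\ag_N^{-1/2}$, which satisfies $\varepsilon_N\to0$ and $\ag_N\varepsilon_N\to\infty$. Then $H(\ag_N\varepsilon_N)\to0$ because $H(x)\searrow0$, $H(0)/N\to0$, and $V(\varepsilon_N)\to0$ because, as $\varepsilon_N\downarrow0$, $V(\varepsilon_N)$ decreases to the $\mathbb{R}^{2d}$-volume of the diagonal $\{(\bs,\bs):\bs\in R_0\}$, which is zero (dominated convergence, dominated by $\mathrm{Vol}(R_0^2)=1$). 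Hence the right-hand side of the bound vanishes.

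I do not expect a real obstacle: \eqref{eq:cov_consist} already absorbs the whole fourth-order bookkeeping into $H$, so from that point on the argument is literally the proof of Proposition~\ref{pr:mean_rand} with $h$ replaced by $H$ and $\bar X_N-\mu$ replaced by $\widehat C_N-C$. The one step deserving a word of care is the conditioning on $\{\bs_k\}$, which is legitimate precisely because $\{\bs_k\}$ is independent of $X$, so that Assumption~\ref{ass:dep_cov} may be applied location-by-location before averaging over the random design.
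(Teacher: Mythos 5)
Your proof is correct and follows exactly the route the paper intends: the authors omit this proof, stating only that it is obtained by rerunning the argument of Proposition~\ref{pr:mean_rand} with the bound \eqref{eq:cov_consist} (i.e.\ $h$ replaced by $H$) as the starting point, which is precisely what you do, including the $\varepsilon_N$-splitting of the double integral over $R_0^2$ and the choice $\varepsilon_N\to 0$, $\ag_N\varepsilon_N\to\infty$. Your two side remarks --- that conditioning on the design is what licenses applying Assumption~\ref{ass:dep_cov} pointwise, and that the extra factor $H(0)$ in front of $V(\varepsilon_N)\sup_{\bs\in R_0}f^2(\bs)$ is immaterial (it is present in the paper's own computation for the mean as well) --- are both accurate.
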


Introducing the (unobservable) operator
\[
\tilde\Gg_N =\frac{1}{N}\sum_{k=1}^N
\bigl( X(\bs_k)-\mu\bigr)\otimes\bigl( X(\bs_k)-\mu\bigr),
\]
we see that
\[
\tilde\Gg_N - \hat\Gg_N = (\bar X_N - \mu) \otimes(\bar X_N - \mu).
\]
Therefore,
\[
E\|\hat\Gg_N-C\|_{\cS}^2\leq
2 E\|\tilde\Gg_N-C\|_{\cS}^2
+ 2 E\|(\bar X_N - \mu) \otimes(\bar X_N - \mu)\|_{\cS}^2.
\]
The bounds in Propositions \ref{p:gnis_cov}, \ref{pr:mean_reg_cov}
and \ref{pr:mean_rand_cov} apply to $E\|\tilde\Gg_N-C\|_{\cS}^2$.
Observe that
\[
E\|(\bar X_N - \mu) \otimes(\bar X_N - \mu)\|_{\cS}^2
= E\|\bar X_N - \mu\|^4.
\]
If $X(\bs)$ are bounded variables, that is, $\sup_{t\in[0,1]}|X(\bs
;t)|\leq B<\infty$ a.s., then $\|\bar X_N - \mu\|^4\leq4B^2\|\bar
X_N - \mu\|^2$.
It follows that under Assumption~\ref{ass:dep} we obtain the same
order of magnitude for the bounds of $E\|\bar X_N - \mu\|^4$
as we have obtained in Propositions \ref{p:gnis}, \ref{pr:mean_reg}
and \ref{pr:mean_rand} for $E\|\bar X_N - \mu\|^2$. In general $E\|
\bar X_N - \mu\|^4$ can neither be bounded in terms of $E\|\bar X_N -
\mu\|^2$ nor with $E\|\hat C_N - C\|^2_\mathcal{S}$. To bound fourth
order moments, conditions
on the covariance between the variables $Z_{k,\ell}:=\langle X(\bs
_{k,N})-\mu, X(\bs_{\ell,N})-\mu\rangle$ and $Z_{i,j}$ for all
$1\leq i,j,k,\ell\leq N$ are unavoidable. However,
a~simpler general approach is to
require higher order moments of $\|X(\bs)\|$.
More precisely, we notice that for any $p>1$, by the H\"older inequality,
\[
E\|\bar{X}_N-\mu\|^4\leq(E\|\bar{X}_N-\mu\|^2 )^{1/p}
\bigl(E\|\bar{X}_N-\mu\|^{(4p-2)/(p-1)} \bigr)^{(p-1)/p}.
\]
Thus as long as $E\|X(\bs)\|^{(4p-2)/(p-1)}<\infty$, we conclude
that, by stationarity,
\[
E\|\bar{X}_N-\mu\|^4\leq M(p) (E\|\bar{X}_N-\mu\|^2 )^{1/p},\vadjust{\goodbreak}
\]
where $M(p)$ depends on the distribution of $X(\bs)$ and on $p$,
but not on $N$.
It is now evident how the results of Section \ref{s:cm} can be used to obtain
bounds for $E\|\hat{\Gamma}_N-C\|_\mathcal{S}^2$. We state in Proposition
\ref{pr:cov} the
version for the general non-random design. The special cases follow, and
the random designs are treated analogously. It follows that if
Assumptions \ref{ass:dep} and \ref{ass:dep-cov} hold, then
$E\|\hat{\Gamma}_N-C\|_\mathcal{S}^2 \to0$, under Type B or C
sampling, provided $E\|X(\bs)\|^{4+\delta}<\infty$.
\begin{proposition}\label{pr:cov}
Let Assumptions~\ref{ass:dep} and \ref{ass:dep-cov} hold and assume that
for some $\delta>0$ we have $E\|X(\bs)\|^{4+\delta}<\infty$.
Assume further that $\mathfrak{S}_N$ defines
a non-random design of Type \emph{A}, \emph{B} or \emph{C}. Then for any $\rho_N>0$ we have
%
\begin{equation}\label{e:covBB}
E\|\hat{\Gamma}_N-C\|_\mathcal{S}^2\leq2 \{H(\rho_N)+H(0)I_{\rho_N}
(\mathfrak{S}_N) \}+2K(\delta) \{h(\rho_N)+h(0)I_{\rho_N}
(\mathfrak{S}_N) \}^{\delta/(2+\delta)},
\end{equation}
where $K(\delta)$ depends only on $\dg$.

If $X(\bs_1)$ is a.s. bounded by some finite constant $B$, then
we can formally let $\delta$ in \eqref{e:covBB} go to $\infty$,
with $K(\infty)=4B^2$.
\end{proposition}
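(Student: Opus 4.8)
The plan is to combine the decomposition already given in the text,
\[
E\|\hat{\Gamma}_N-C\|_\mathcal{S}^2\leq
2 E\|\tilde\Gg_N-C\|_{\cS}^2
+ 2 E\|\bar X_N - \mu\|^4,
\]
with two bounds: one for $E\|\tilde\Gg_N-C\|_{\cS}^2$ and one for $E\|\bar X_N - \mu\|^4$. For the first term, note that $\tilde\Gg_N$ is exactly the sample covariance operator formed from the (uncentered, mean-$\mu$) functions, so it plays the role of $\widehat C_N$ in Proposition~\ref{p:gnis_cov} after recentering; hence under Assumption~\ref{ass:dep_cov} and a non-random design of Type A, B or C we get directly
\[
E\|\tilde\Gg_N-C\|_{\cS}^2\leq H(\rho_N)+H(0)I_{\rho_N}(\mathfrak{S}_N),
\]
which accounts for the first brace in \eqref{e:covBB} with its factor $2$. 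For the second term I would invoke the H\"older inequality exactly as displayed in the paragraph preceding the proposition: with $p=1+2/\delta$ one has $\frac{4p-2}{p-1}=4+\delta$, so the moment assumption $E\|X(\bs)\|^{4+\delta}<\infty$ and stationarity give
\[
E\|\bar X_N-\mu\|^4\leq M(p)\bigl(E\|\bar X_N-\mu\|^2\bigr)^{1/p}
= M(p)\bigl(E\|\bar X_N-\mu\|^2\bigr)^{\delta/(2+\delta)}.
\]
Then Proposition~\ref{p:gnis} bounds $E\|\bar X_N-\mu\|^2$ by $h(\rho_N)+h(0)I_{\rho_N}(\mathfrak{S}_N)$, and raising this to the power $\delta/(2+\delta)$ produces the second brace of \eqref{e:covBB}; the constant $C(\delta)$ is just $M(1+2/\delta)$ absorbed into the factor $2$.

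The only genuine point requiring care is the finiteness and $N$-independence of $M(p)$: one must check that $E\|\bar X_N-\mu\|^{4+\delta}$ is bounded uniformly in $N$. This follows because, by the triangle inequality in $L^2$ and convexity of $x\mapsto x^{4+\delta}$,
\[
E\|\bar X_N-\mu\|^{4+\delta}
=E\Bigl\|\tfrac1N\sum_{k=1}^N (X(\bs_{k,N})-\mu)\Bigr\|^{4+\delta}
\leq \tfrac1N\sum_{k=1}^N E\|X(\bs_{k,N})-\mu\|^{4+\delta}
= E\|X(\bs)-\mu\|^{4+\delta},
\]
using stationarity in the last step; finiteness of this quantity is equivalent to $E\|X(\bs)\|^{4+\delta}<\infty$ (since $\mu$ is a fixed element of $L^2$, which itself is finite under the moment assumption). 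Thus $M(p)$ can be taken as $\bigl(E\|X(\bs)-\mu\|^{4+\delta}\bigr)^{\delta/(2+\delta)}$, which does not depend on $N$.

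Finally, for the bounded case $\sup_{t\in[0,1]}|X(\bs;t)|\le B$ a.s., I would use the elementary estimate already noted in the text, $\|\bar X_N-\mu\|^4\le 4B^2\|\bar X_N-\mu\|^2$, so that $E\|\bar X_N-\mu\|^4\le 4B^2\,\bigl(h(\rho_N)+h(0)I_{\rho_N}(\mathfrak{S}_N)\bigr)$ by Proposition~\ref{p:gnis}. Substituting into the decomposition gives \eqref{e:covBB} with the exponent $\delta/(2+\delta)$ replaced by $1$, i.e.\ the formal limit $\delta\to\infty$, and with $C(\infty)=4B^2$. I expect the main obstacle to be purely expository rather than mathematical: making sure the arithmetic $\frac{4p-2}{p-1}=4+\delta$ and $1/p=\delta/(2+\delta)$ is stated cleanly, and being explicit that the two auxiliary propositions (Proposition~\ref{p:gnis} for the mean, Proposition~\ref{p:gnis_cov} applied to $\tilde\Gg_N$) are exactly what supplies the two braces; no new estimate beyond H\"older and those propositions is needed.
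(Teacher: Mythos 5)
Your argument is exactly the paper's: the paper proves Proposition~\ref{pr:cov} by the decomposition $E\|\hat\Gg_N-C\|_{\cS}^2\le 2E\|\tilde\Gg_N-C\|_{\cS}^2+2E\|\bar X_N-\mu\|^4$, Proposition~\ref{p:gnis_cov} for the first term, and the H\"older step with $p=1+2/\delta$ combined with Proposition~\ref{p:gnis} for the second, plus $\|\bar X_N-\mu\|^4\le 4B^2\|\bar X_N-\mu\|^2$ in the bounded case. Your Jensen argument for the $N$-uniform finiteness of $M(p)$ is a welcome explicit detail (the paper just says ``by stationarity''); only note that the exponent in your closed form for $M(p)$ should be $(p-1)/p=2/(2+\delta)$ rather than $\delta/(2+\delta)$, a harmless slip since $M(p)$ is merely absorbed into $C(\delta)$.
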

%
%
\section{Inconsistent empirical functional principal components}
\label{s:example}
%
%
We define
$
X^\star= \langle X(\bzero), \cdot\rangle X(\bzero).
$
Observe that
$
X^\star( X(\bzero) ) = \|X(\bzero)\|^2 X(\bzero).
$
Thus, $\|X(\bzero)\|^2= \sum_{j=1}^\infty\xi_j^2(\bzero)$ is an
eigenvalue of
$X^\star$.
Note also that for $x\in L^2$,
\[
X^\star(x)(t) = \biggl(\int X(\bzero;u) x(u) \,\mathrm{d}u\biggr) X(\bzero;t)
= \int c^\star(t,u) x(u) \,\mathrm{d}u,
\]
where
\[
c^\star(t,u) = X(\bzero;t) X(\bzero;u).
\]
Since
\[
E \dint( c^\star(t,u))^2 \,\mathrm{d}t\,\mathrm{d}u = E\|X(\bzero)\|^4 < \infty,
\]
the operator $X^\star$ is Hilbert--Schmidt almost surely.
\begin{proposition} \label{p:example}
Suppose representation \refeq{xi} holds with stationary mean zero
Gaussian processes $\xi_j$ such that
\[
E[\xi_j(\bs) \xi_j(\bs+\bh)] = \la_j \rho_j(h),\qquad h =\| \bh\|,
\]
where each $\rho_j$ is a continuous correlation function,
and $\sum_j \la_j < \infty$. Assume the processes
$\xi_j$ and $\xi_i$ are independent if $i\neq j$.
If $\mathfrak{S}_N=\{\bs_1, \bs_2, \ldots,\bs_N\}\subset\mathbb
{R}^d$ with
$\bs_N\to\bzero$, then
%
\begin{equation}\label{e:convXstar}
\lim_{N\to\infty} E \|\widehat C_N - X^\star\|_\cS^2 = 0.
\end{equation}
\end{proposition}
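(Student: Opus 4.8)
The plan is to expand $E\|\widehat C_N - X^\star\|_\cS^2$ directly in terms of the Gaussian second-order structure, and show each contributing term vanishes as $\bs_n \to \bzero$, using the continuity of the correlation functions $\rho_j$ together with the summability $\sum_j \la_j < \infty$. First I would write $\widehat C_N - X^\star = \frac1N\sum_{n=1}^N \big( X(\bs_n)\otimes X(\bs_n) - X(\bzero)\otimes X(\bzero) \big)$, so that
\begin{align*}
E\|\widehat C_N - X^\star\|_\cS^2
&= \frac{1}{N^2}\sum_{m,n=1}^N E\big\langle X(\bs_m)\otimes X(\bs_m) - X(\bzero)\otimes X(\bzero)\,,\,
X(\bs_n)\otimes X(\bs_n) - X(\bzero)\otimes X(\bzero)\big\rangle_\cS.
\end{align*}
Since $\bs_n \to \bzero$, it suffices to show that the generic summand tends to $0$ uniformly as $m,n\to\infty$; a Cesàro argument then finishes the bound. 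Thus the core task reduces to showing that $\bs,\bs' \to \bzero$ implies
\bd
E\big\langle X(\bs)\otimes X(\bs) - X(\bzero)\otimes X(\bzero)\,,\,
X(\bs')\otimes X(\bs') - X(\bzero)\otimes X(\bzero)\big\rangle_\cS \longrightarrow 0.
\ed

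Next I would exploit the representation \refeq{xi} and the independence of the $\xi_j$ to express the inner product explicitly. Writing $r_j(\bs,\bs') = E[\xi_j(\bs)\xi_j(\bs')]$, a computation analogous to the one in the proof of Lemma~\ref{le:cov_simp} (but without subtracting $C$) gives that $E\langle X(\bs)\otimes X(\bs), X(\bs')\otimes X(\bs')\rangle_\cS = \big(\sum_j r_j(\bs,\bs')\big)^2 + \sum_j\big( E[\xi_j^2(\bs)\xi_j^2(\bs')] - r_j(\bs,\bs')^2\big)$, and by the Gaussian identity in Lemma~\ref{l:cov2}, $E[\xi_j^2(\bs)\xi_j^2(\bs')] = \la_j^2 + 2 r_j(\bs,\bs')^2$ (using $E\xi_j^2(\bs) = \la_j$ by stationarity). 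Hence
\bd
E\langle X(\bs)\otimes X(\bs)\,,\, X(\bs')\otimes X(\bs')\rangle_\cS
= \Big(\sum_j r_j(\bs,\bs')\Big)^2 + 2\sum_j r_j(\bs,\bs')^2 + \Big(\sum_j \la_j\Big)^2 - \sum_j \la_j^2 + \sum_j\la_j^2,
\ed
wait — I should be careful to track the $\la_j^2$ terms precisely; in any case each of the four bilinear pieces obtained by expanding the difference is of this form with $(\bs,\bs')$ replaced by one of $(\bs,\bs')$, $(\bs,\bzero)$, $(\bzero,\bs')$, $(\bzero,\bzero)$. As $\bs,\bs'\to\bzero$, continuity of each $\rho_j$ gives $r_j(\bs,\bs') \to \la_j$, $r_j(\bs,\bzero)\to\la_j$, etc.; since $|r_j(\cdot,\cdot)| \le \la_j$ (Cauchy–Schwarz) and $\sum_j\la_j<\infty$, dominated convergence lets me pass the limit inside each sum. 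All four bilinear pieces then converge to the common value $\big(\sum_j\la_j\big)^2 + 2\sum_j\la_j^2$, so their signed combination (i.e. the inner product of the differences) converges to $0$.

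The main obstacle I anticipate is the interchange of limit and infinite summation — justifying that the $\rho_j(\|\bs - \bs'\|)\to 1$ convergence can be applied termwise inside $\sum_j$ and inside the squared sum $\big(\sum_j r_j\big)^2$. For the linear sums this is immediate from $|r_j| \le \la_j$ and $\sum\la_j<\infty$ via dominated convergence; for the squared sum one writes $\big(\sum_j r_j(\bs,\bs')\big)^2 - \big(\sum_j\la_j\big)^2 = \sum_j (r_j(\bs,\bs') - \la_j)\cdot\sum_i (r_i(\bs,\bs')+\la_i)$, where the second factor is bounded by $2\sum\la_j$ and the first tends to $0$ by dominated convergence again. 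A secondary point requiring a line of care is reducing the double sum $\frac1{N^2}\sum_{m,n}$ to the termwise limit: since the summand is bounded uniformly (by $4(\sum_j\la_j)^2 + 8\sum_j\la_j^2$, say, using $|r_j|\le\la_j$ throughout) and converges to $0$ as $\min(m,n)\to\infty$, a standard Toeplitz/Cesàro argument gives that the average converges to $0$. No uniformity in the rate of $\bs_n\to\bzero$ is needed, which is why the hypothesis is just $\bs_n\to\bzero$.
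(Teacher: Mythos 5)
Your proof is correct, but it takes a genuinely different route from the paper's. You expand $E\|\widehat C_N-X^\star\|_{\cS}^2$ into the four bilinear pieces $F(\bs,\bs')=E\langle X(\bs)\otimes X(\bs),X(\bs')\otimes X(\bs')\rangle_{\cS}=E\langle X(\bs),X(\bs')\rangle^2$ and evaluate each exactly via independence and the Gaussian fourth-moment identity; the bookkeeping you flagged mid-display resolves to
\bd
F(\bs,\bs')=\Big(\sum_j r_j(\bs,\bs')\Big)^2+\sum_j r_j(\bs,\bs')^2+\sum_j \la_j^2,
\ed
which indeed converges to the common value $\big(\sum_j\la_j\big)^2+2\sum_j\la_j^2$ for each of the four argument pairs, so the signed combination vanishes and your dominated-convergence and Ces\`aro steps are exactly as needed (your displayed intermediate formula is wrong, but the limit you extract from it is right, so this is a blemish rather than a gap). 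The paper instead telescopes $X(\bs_n;t)X(\bs_n;u)-X(\bzero;t)X(\bzero;u)$ into two cross terms, applies Cauchy--Schwarz twice, and reduces everything to showing $E\big(\int Y_k^2(u)\,du\big)^2\to 0$ for the increment $Y_k=X(\bs_k)-X(\bzero)$, i.e.\ to $L^4$-continuity of the field at $\bzero$. Your computation is shorter and more transparent because it exploits the identity $\langle x\otimes x,y\otimes y\rangle_{\cS}=\langle x,y\rangle^2$ to avoid kernels and integrals altogether; the paper's bound is slightly more modular, since once one knows $E\|X(\bs_k)-X(\bzero)\|^4\to 0$ the conclusion follows without computing the exact fourth-order structure of the inner products, which would matter if one wanted to relax Gaussianity or the independence of the $\xi_j$ to a moment condition on increments. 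Both arguments ultimately consume the same hypotheses (Gaussianity, independence of the $\xi_j$, continuity of the $\rho_j$, and $\sum_j\la_j<\infty$ for domination).
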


Proposition \ref{p:example} is proven in Section \ref{s:p-cm}.
Since the eigenvalues of $X^\star$ are random they
cannot be close to any of the $\la_j$.
The eigenfunctions of $\widehat C_N$ are also close to random functions
in $L^2$, and do not converge to the FPC's $e_j$.

We now present a very specific example that illustrates Proposition
\ref{p:example}.
\begin{example} \label{ex:example}
Suppose
%
\begin{equation}\label{e:X-ex}
X(s; t) = \zg_1(s) e_1(t) + \sqrt\la\zg_2(s) e_2(t),
\end{equation}
where the $\zg_1$ and $\zg_2$ are i.i.d. processes on the line,
and $0<\la< 1$. Assume that the
processes $\zg_1$ and $\zg_2$ are Gaussian with mean zero and covariances
$E[\zg_j(s)\zg_j(s+h)] = \exp\{ - h^2\}, j =1, 2$.
Thus, each $Z_j := \zg_j(0)$ is standard normal. Rearranging the
terms, we obtain
\[
X^\star(x) =
\bigl( Z_1^2\langle x, e_1 \rangle+ \sqrt\la Z_1 Z_2 \langle x, e_2
\rangle\bigr) e_1
+\bigl(\sqrt\la Z_1 Z_2 \langle x, e_1 \rangle+ \la Z_2^2 \langle x, e_2
\rangle
\bigr) e_2.
\]
The matrix
\[
\left[\matrix{Z_1^2 & \sqrt\la Z_1 Z_2\cr
\sqrt\la Z_1 Z_2 & \la Z_2^2}
\right]
\]
has only one positive eigenvalue $Z_1^2 + \la Z_2^2 = \|X(0)\|^2$.
A normalized eigenfunction associated with it is
%
\begin{equation}\label{e:f-ex}
f:= \frac{X(0)}{\| X(0)\|} = [ Z_1^2 + \la Z_2^2 ]^{-1/2}
\bigl( Z_1 e_1 + \sqrt\la Z_2 e_2\bigr ).
\end{equation}
Denote by $\hat v_1$ a normalized eigenfunction corresponding to
the largest eigenvalue of $\widehat C_N$. By Lemma \ref{l:B4.3},
$\hat v_1$ is close in probability
to $\operatorname{sign} (\langle\hv_1, f \rangle) f$. It is thus not
close to $\operatorname{sign} (\langle\hv_1, e_1 \rangle) e_1$.
%
%
%
\begin{figure}

\includegraphics{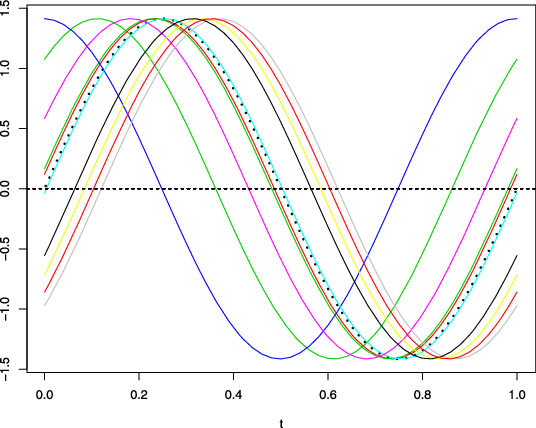}

\caption{Ten simulated EFPC's $\hat v_1$ for process \protect\refeq{X-ex}
with $\la=0.5$ and
$e_1(t) = \sqrt{2}\sin(2\uppi t)$, $e_2(t) = \sqrt{2}\cos(2\uppi t)$
($N=100$). The dotted line is the population eigenfunction.}\label{f:tenXi}
\end{figure}
%

Ten simulated $\hv_1$ are shown in Figure \ref{f:tenXi}.
To compute each $\hv_1$, we simulated curves $X(s_n)$ given by \refeq{X-ex}
with with $e_1(t) = \sqrt{2}\sin(2\uppi  t),
e_2(t) = \sqrt{2}\cos(2\uppi  t)$,
$\la=0.5$. We set $s_n=n^{-1}, n=1,2,\ldots, N, N=100$.
The random vectors $[\zg_j(s_1), \zg_j(s_2), \ldots, \zg_j(s_N)]^T$
were generated using the R function
\texttt{rmvnorm} which uses the singular value decomposition to
simulate Gaussian vectors with predetermined covariances
(in our case,
$\operatorname{Cov}(\zg_j(s_k), \zg_j(s_\ell)) = \exp\{-|k^{-1}
- \ell^{-1}|\}$).

The EFPC $\hv_1$ is a linear combination of $e_1$ and $e_2$ with
random weights.
As formula \refeq{f-ex} suggests, the function $e_1$ is likely to receive
a larger weight. The weights, and so the simulated $\hv_1$,
cluster because both $Z_1$ and $Z_2$ are standard normal.
\end{example}

We now state a general result showing that Type A
sampling generally leads to inconsistent estimators if the
spatial dependence does not vanish.
\begin{proposition}\label{pr:dep_inc}
Assume that $E\langle X(\bs_{1})-\mu, X(\bs_{2})-\mu\rangle\geq
b(\|\bs_1-\bs_2\|_2)>0$, where $b(x)$ is non-increasing.
Then under Type \emph{A} sampling
the sample mean $\bar X_N$ is not a consistent estimator of $\mu$.
Similarly, if
$EX(\bs)=0$ and
%
\begin{equation}\label{e:inc-A}
E\langle X(\bs_{1})\otimes X(\bs_1)-C
, X(\bs_{2})\otimes X(\bs_2)-C \rangle_{\cS}\geq
B(\|\bs_1-\bs_2\|_2)>0,
\end{equation}
where $B(x)$ is nonincreasing, then under Type \emph{A} sampling
the sample covariance $\widehat C_N$ is not a consistent estimator of $C$.
\end{proposition}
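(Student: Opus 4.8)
The plan is to show that under Type A sampling the mean-square loss $E\|\bar X_N-\mu\|^2$ does not tend to zero, which already contradicts consistency in the sense used throughout the paper. Expanding the square,
\[
E\|\bar X_N-\mu\|^2
=\frac{1}{N^2}\sum_{k,\ell=1}^{N}E\langle X(\bs_{k,N})-\mu,\,X(\bs_{\ell,N})-\mu\rangle
\ \ge\ \frac{1}{N^2}\sum_{k,\ell=1}^{N}b\big(\|\bs_{k,N}-\bs_{\ell,N}\|_2\big),
\]
where the hypothesis $E\langle X(\bs_1)-\mu,\,X(\bs_2)-\mu\rangle\ge b(\|\bs_1-\bs_2\|_2)>0$ makes \emph{every} summand strictly positive. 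This positivity is exactly what lets us freely throw away terms; under Assumption~\ref{ass:dep}, where only the absolute value of the covariance is controlled, this step would not be available, and that is the reason the inconsistency phenomenon requires a genuine sign condition.

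The geometric core is then immediate. By Type A sampling there are $\rho>0$, $c>0$ and a subsequence $N_m$ along which $I_\rho(\mathfrak{S}_{N_m})\ge c$. For each such $N$ choose $\bs^\star_N\in\mathfrak{S}_N$ attaining the supremum defining $I_\rho(\mathfrak{S}_N)$ and set $A_N=\{k:\|\bs_{k,N}-\bs^\star_N\|_2\le\rho\}$, so that $|A_N|=N\,I_\rho(\mathfrak{S}_N)$. Any two indices in $A_N$ are at distance at most $2\rho$, hence monotonicity of $b$ gives $b(\|\bs_{k,N}-\bs_{\ell,N}\|_2)\ge b(2\rho)>0$ for $k,\ell\in A_N$. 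Restricting the double sum above to $A_N\times A_N$,
\[
E\|\bar X_N-\mu\|^2\ \ge\ \frac{|A_N|^2}{N^2}\,b(2\rho)\ =\ I_\rho(\mathfrak{S}_N)^2\,b(2\rho),
\]
so along $N_m$ the left side is $\ge c^2 b(2\rho)>0$ and therefore $\limsup_N E\|\bar X_N-\mu\|^2>0$. For a stochastic design the same pointwise inequality holds conditionally on the locations (using, as in Propositions~\ref{pr:mean_rand} and \ref{pr:mean_rand_cov}, that the design is independent of $X$); taking expectations and applying Jensen, $E\|\bar X_N-\mu\|^2\ge b(2\rho)\,E[I_\rho(\mathfrak{S}_N)^2]\ge b(2\rho)\,(E I_\rho(\mathfrak{S}_N))^2$, and Type A gives $\limsup_N E I_\rho(\mathfrak{S}_N)>0$. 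The covariance statement is obtained by the identical computation with $X(\bs)-\mu$ replaced by $X(\bs)\otimes X(\bs)-C$, $\langle\cdot,\cdot\rangle$ by $\langle\cdot,\cdot\rangle_{\cS}$, and $b$ by $B$: starting from
\[
E\|\widehat C_N-C\|_{\cS}^2=\frac{1}{N^2}\sum_{k,\ell=1}^{N}E\langle X(\bs_k)\otimes X(\bs_k)-C,\,X(\bs_\ell)\otimes X(\bs_\ell)-C\rangle_{\cS},
\]
condition \eqref{e:inc-A} and the same ball argument yield $E\|\widehat C_N-C\|_{\cS}^2\ge I_\rho(\mathfrak{S}_N)^2 B(2\rho)$, which stays bounded away from $0$ under Type A.

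I do not anticipate a real obstacle; the only genuinely delicate point is the \emph{notion} of consistency. The argument above directly refutes mean-square consistency. If one wants inconsistency in probability, a Paley--Zygmund step does the job whenever a matching higher moment is finite: from $E\|\bar X_N-\mu\|^2\ge\delta$ and $E\|\bar X_N-\mu\|^4\le M$ one gets $P(\|\bar X_N-\mu\|^2\ge\delta/2)\ge(\delta/2)^2/M>0$ along the subsequence, so $\bar X_N\not\to\mu$ in probability; the fourth-moment bound needs $E\|X(\bs)\|^4<\infty$ for $\bar X_N$ and $E\|X(\bs)\|^8<\infty$ for $\widehat C_N$. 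Since the rest of the paper phrases consistency at the mean-square level, I would state and prove Proposition~\ref{pr:dep_inc} at that level and merely remark on the in-probability upgrade.
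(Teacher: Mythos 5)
Your argument is exactly the paper's: expand the quadratic loss, use the sign condition to lower-bound every summand by $b(\|\bs_{k,N}-\bs_{\ell,N}\|_2)$, restrict the double sum to the indices in a ball of radius $\rho$ realizing $I_\rho(\mathfrak{S}_N)$, and invoke Type A to keep $I_\rho^2(\mathfrak{S}_N)\,b(\cdot)$ bounded away from zero along a subsequence (the paper treats only the mean case explicitly and notes the covariance case is identical). If anything you are slightly more careful than the paper, which writes the constant as $b(\rho)$ where the triangle inequality really gives $b(2\rho)$, and your added remarks on the stochastic design via Jensen and on the in-probability upgrade via Paley--Zygmund are correct but go beyond what the paper records.
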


We illustrate Proposition \ref{pr:dep_inc} with a further example that
complements Proposition~\ref{p:example}
in a sense that in Proposition~\ref{p:example} the functional model
was complex, but the spatial distribution of the $\bs_k$ simple.
In Example \ref{ex:gauss2}, we allow a general Type A distribution,
but consider the simple model~\refeq{X-ex}.
\begin{example} \label{ex:gauss2}
We focus on condition \refeq{inc-A} for the FPC's. For the general
model \refeq{xi}, the left-hand side of \refeq{inc-A} is equal to
\[
\kg(\bs_1, \bs_2) =
\sum_{i,j \ge1} \operatorname{Cov}(\xi_i(\bs_1)\xi_j(\bs_1) ,
\xi_i(\bs_2)\xi_j(\bs_2) ).
\]
If the processes $\xi_j$ satisfy the assumptions of Proposition
\ref{p:example}, then, by Lemma \ref{l:cov2},
\[
\operatorname{Cov}(\xi_i(\bs_1)\xi_j(\bs_1) , \xi_i(\bs_2)\xi
_j(\bs_2) )
= \la_i^2 r_i + \la_j^2 r_j + \la_i\la_j \frac{r_i + r_j}{2}
- (\la_i^{3/2} r_i + \la_j^{3/2} r_j )\sqrt{\la_i + \la_j},
\]
where $r_i = \rho_i(\| \bs_1 - \bs_2 \|)$.

To calculate $\kg(\bs_1, \bs_2)$ in a simple case, corresponding to
\refeq{X-ex}, suppose
%
\begin{equation}\label{e:inc-xi}
\la_1=1,\qquad \la_2 = \la, \qquad 0 < \la< 1,\qquad \la_i = 0,\qquad i >2
\quad\mbox{and}\quad \rho_1 = \rho_2 = \rho.
\end{equation}
Then,
\[
\kg(\bs_1, \bs_2) = f(\la) \rho(\| \bs_1 - \bs_2 \|),
\]
where
\[
f(\la) =\bigl(3-2\sqrt{2}\bigr) (1 + \la^2)
+ 2 [1 + \la+ \la^2 - (1 + \la^{3/2}) ( 1+\la)^{1/2}].
\]
The function $f$ increases from about 0.17 at $\la=0$ to about 0.69 at
$\la=1$.

We have verified that if the functional random field \refeq{xi}
satisfies the assumptions of Proposition~\ref{p:example} and \refeq{inc-xi},
then $\widehat C_N$ is an inconsistent estimator of $C$
under Type A sampling,
whenever $\rho(h)$ is a nonincreasing function of $h$.
\end{example}
%
\section{\texorpdfstring{Proofs of the results of Sections \protect\ref{s:cm}, \protect\ref{s:cc} and \protect\ref{s:example}}
{Proofs of the results of Sections 3, 4 and 5}} \label{s:p-cm}
We will use the following well-known lemma.
\begin{lemma} \label{l:cov2}
Suppose $X$ and $Y$ are jointly normal mean zero random variables
such that
$
EX^2 = \sg^2, EY^2 = \nu^2, E[XY]=\rho\sg\nu.
$
Then
\[
\operatorname{Cov}(X^2, Y^2) = 2\rho^2\sg^2\nu^2.
\]
\end{lemma}
\begin{pf*}{Proof of Proposition~\ref{p:gnis}}
By Assumption~\ref{ass:dep}, we have
\begin{eqnarray*}
&&E \Biggl\|\frac{1}{N}\sum_{k=1}^{N}X(\bs_{k,N})-\mu\Biggr\|^2\\
&&\quad =
\frac{1}{N^2}\sum_{k=1}^{N}\sum_{\ell=1}^{N}
E\langle X(\bs_{k,N})-\mu, X(\bs_{\ell,N})-\mu
\rangle\\
&&\quad \leq\frac{1}{N^2}\sum_{k=1}^{N}\sum_{\ell=1}^{N}
h (\|\bs_{k,N}-\bs_{\ell,N}\|_2 )\\
&&\quad \leq\frac{1}{N^2}\sum_{k=1}^N\sum_{\ell=1}^N \bigl(
h(\rho_N)I\{\|\bs_{k,N}-\bs_{\ell,N}\|_2\geq\rho_N\}
+h(0)I\{\|\bs_{k,N}-\bs_{\ell,N}\|_2\leq\rho_N\} \bigr)\\
&&\quad \leq h(\rho_N)+h(0) I_{\rho_N}(\mathfrak{S}_N).
\end{eqnarray*}
%
\upqed\end{pf*}

The following lemma is a simple calculus problem and will be used in the
proof of Proposition~\ref{pr:mean_reg}.
\begin{lemma}\label{le:help}
Assume that $f$ is a nonnegative function which is monotone on $[0,b]$
and on $[b,\infty)$.
Then
\[
\sum_{k=0}^Lf \biggl(\frac{k}{N}\biggr )\frac{1}{N}\leq
\int_{0}^{L/N}f(x)\,\mathrm{d}x+\frac{2}{N}\sup_{x\in[0,L/N]}|f(x)|.
\]
\end{lemma}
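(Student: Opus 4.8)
\textbf{Proof proposal for Lemma~\ref{le:help}.}
The plan is to reduce the statement to the elementary comparison between an equispaced Riemann sum and the corresponding integral over an interval on which the integrand is monotone, where the error is at most one mesh width times the oscillation of the function. First I would record this one-piece estimate: if $g\ge 0$ is monotone on an interval containing the equispaced points $x_0<x_1<\dots<x_m$ with $x_i-x_{i-1}=1/N$, then
$$
\frac1N\sum_{i=0}^m g(x_i)\le\int_{x_0}^{x_m}g(x)\,dx+\frac1N\sup_{[x_0,x_m]}|g|.
$$
Indeed, if $g$ is nonincreasing then $g(x_i)\le Ng(x)$ for $x\in[x_{i-1},x_i]$, so $\frac1N g(x_i)\le\int_{x_{i-1}}^{x_i}g$; summing over $i=1,\dots,m$ gives $\frac1N\sum_{i=0}^m g(x_i)\le\int_{x_0}^{x_m}g+\frac1N g(x_0)$, and $g(x_0)=\sup_{[x_0,x_m]}g$. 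If $g$ is nondecreasing, the same argument with left endpoints yields the bound with $g(x_m)=\sup_{[x_0,x_m]}g$ in place of $g(x_0)$.

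Next I would split the index range at the break point $b$. If $L/N\le b$, then $f$ is monotone on all of $[0,L/N]$ and the one-piece estimate applied directly already gives the claim (with only a single $\frac1N\sup$ term, hence a fortiori the stated bound with $\frac2N\sup$). Otherwise set $K=\lfloor bN\rfloor$, so that $0\le K<L$; the points $0/N,\dots,K/N$ all lie in $[0,K/N]\subseteq[0,b]$, and the points $(K+1)/N,\dots,L/N$ all lie in $[(K+1)/N,L/N]\subseteq(b,\infty)$, using $K\le bN<K+1$. Applying the one-piece estimate separately on $[0,K/N]$ and on $[(K+1)/N,L/N]$, where $f$ is monotone, and then bounding both local suprema by $\sup_{[0,L/N]}|f|$ and using $f\ge0$ to combine the two disjoint integrals, I add the two bounds:
$$
\sum_{k=0}^L f\Big(\tfrac kN\Big)\frac1N
\le\int_0^{K/N}f+\int_{(K+1)/N}^{L/N}f+\frac2N\sup_{[0,L/N]}|f|
\le\int_0^{L/N}f(x)\,dx+\frac2N\sup_{[0,L/N]}|f|,
$$
which is the assertion.

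There is no real obstacle here; the statement is a calculus exercise. The only points requiring a little care are to make the two integration intervals disjoint — dropping the middle cell $[K/N,(K+1)/N]$, which is harmless since $f\ge0$ — so that $\int_0^{L/N}f$ enters with coefficient one rather than two, and to check the degenerate cases $L/N\le b$ and $K=0$ (where the first ``piece'' is the single point $f(0)$ and the bound is trivial), both of which are immediate.
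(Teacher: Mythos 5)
Your proof is correct; the paper itself gives no argument for this lemma (it is dismissed as ``a simple calculus problem''), and your Riemann-sum comparison split at the monotonicity breakpoint $K=\lfloor bN\rfloor$, with one mesh-width error term from each monotone piece and non-negativity used to recombine the two disjoint integrals, is exactly the standard argument the bound $\frac{2}{N}\sup$ is designed for. The only blemish is the stray factor in ``$g(x_i)\le Ng(x)$'', which should read $g(x_i)\le g(x)$ for $x\in[x_{i-1},x_i]$; the displayed conclusion $\frac1N g(x_i)\le\int_{x_{i-1}}^{x_i}g$ is what you actually use and is correct.
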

\begin{pf*}{Proof of Proposition~\ref{pr:mean_reg}}
By Assumption~\ref{ass:dep},
\[
E \Biggl\|\frac{1}{S_N}\sum_{k=1}^{S_N}X(\bs_{k,N})-\mu\Biggr\|^2
\leq\frac
{1}{S_N^2}\sum_{k=1}^{S_N}\sum_{\ell=1}^{S_N}
h (\|\bs_{k,N}-\bs_{\ell,N}\|_2 ).
\]
Let $\mathbf{a}=(a_1,\ldots,a_d)$ and $\mathbf{b}=(b_1,\ldots,b_d)$
be two elements on $\mathcal{Z}(\boldsymbol{\delta})$.
We define
$d(\mathbf{a},\mathbf{b})=\min_{1\leq i\leq d}v_i(\mathbf
{a},\mathbf{b})$,
where $v_i(\mathbf{a},\mathbf{b})$ is the number of edges between
$a_i$ and $b_i$.
For any two points $\bs_{k,N}$ and $\bs_{\ell,N}$, we have
%
\begin{equation}\label{dist}
d(\bs_{k,N},\bs_{\ell,N})=m \qquad\mbox{from some } m\in\{0,\ldots
,KN^{1/d}\},
\end{equation}
where $K$ depends on $\operatorname{diam}(R_0)$.
It is easy to see that the number
of points on the grid having
distance $m$ from a given point is less than $2d(2m+1)^d$, $m\geq0$.
Hence, the number of pairs for which \eqref{dist} holds
is $< 2d(2m+1)^{d-1}N$.
On the other hand, if $d(\bs_{k,N},\bs_{\ell,N})=m$,
then $\|\bs_{k,N}-\bs_{\ell,N}\|_2\geq m\delta_0 \eta_N$. Let
us assume without loss of generality that $\delta_0=1$.
Noting that there is no loss of
generality if we assume that $x^{\delta-1}h(x)$
is also monotone on $[0,b]$, we obtain
by Lemma~\ref{le:help} for large enough $N$ and $K<K'<K''$
\begin{eqnarray*}
&&\frac{1}{S_N^2}\sum_{k=1}^{S_N}\sum_{\ell=1}^{S_N}
h (\|\bs_{k,N}-\bs_{\ell,n}\|_2 )\\
&&\quad \leq2d\sum_{m=1}^{K'N^{1/d}}\frac{(2m+1)^{d-1}}{N}
h (m\eta_N )+\frac{2h(0)}{N}\\
&&\quad \leq2d \biggl(\frac{3}{\eta
_N} \biggr)^{d-1}
\sum_{m=0}^{K'N^{1/d}+1} \biggl(\frac{m}{N} N\eta_N \biggr)^{d-1}
h \biggl(\frac{m}{N} N\eta_N \biggr)\frac{1}{N}+\frac{2h(0)}{N}\\
&&\quad \leq2d \biggl(\frac{3}{\eta_N}\biggr )^{d-1}
\biggl(\int_{0}^{K''N^{1/d-1}}(N\eta_Nx)^{d-1}
h (N\eta_Nx )\,\mathrm{d}x \\
&&\hphantom{\quad \leq2d \biggl(\frac{3}{\eta_N}\biggr )^{d-1}
\biggl(}{} + \frac{2}{N}\sup_{x\in[0,K''\ag_N/\Delta]}x^{d-1}h(x) \biggr)
+\frac{2h(0)}{N}\\
&&\quad = \frac{(3\Delta)^d d}{\ag_N^d}\int_{0}^{K''\ag_N/\Delta}x^{d-1}
h (x )\,\mathrm{d}x\\
&&\qquad{} +\frac{4d(3\Delta)^{d-1}}{\ag_N^{d-1}N^{1/d}}
\sup_{x\in[0,K''\ag_N/\Delta]}x^{d-1}h(x)+\frac
{2h(0)}{N}.
\end{eqnarray*}
By Lemma~\ref{le:scaling}, Type B sampling implies $\ag_N\to\infty$ and
$\ag_N=\mathrm{o} (N^{1/d} )$. This shows \eqref{eq:mean_reg}. Under Type C
sampling $1/\ag_N^d\ll1/N$. The proof is finished.
\end{pf*}
\begin{pf*}{Proof of Proposition~\ref{pr:mean_rand}}
This time we have
\begin{eqnarray*}
E \Biggl\|\frac{1}{N}\sum_{k=1}^N X(\bs_{k,N})-\mu\Biggr\|^2&\leq&\frac
{1}{N^2}\sum_{k=1}^N\sum_{\ell=1}^N
Eh (\|\bs_{k,N}-\bs_{\ell,N}\|_2 )\\
& \leq&
\ag_N^{-2d}\int_{R_N}\int_{R_N}h (\|\bs-\br\|_2 )f(\ag_N^{-1}\bs)
f(\ag_N^{-1}\br) \,\mathrm{d}\bs \,\mathrm{d}\br+\frac{h(0)}{N}\\
& =&
\int_{R_0}\int_{R_0}h (\ag_N\|\bs-\br\|_2 )f(\bs)
f(\br) \,\mathrm{d}\bs \,\mathrm{d}\br+\frac{h(0)}{N}.
\end{eqnarray*}
Furthermore, for any $\varepsilon_N>0$,
\begin{eqnarray*}
&&\int_{R_0}\int_{R_0}h (\ag_N\|\bs-\br\|_2 )f(\bs)
f(\br) \,\mathrm{d}\bs \,\mathrm{d}\br\\
&&\quad \leq
h(0)\int_{R_0}\int_{R_0}f(\bs)
f(\br)I \{\|\bs-\br\|_2\leq\varepsilon_N \} \,\mathrm{d}\bs \,\mathrm{d}\br+h(\ag
_N\varepsilon_N)\\
& &\quad\leq
h(0)\sup_{\bs\in R_0}f^2(\bs)\times\int_{R_0}\int_{R_0}I \{ \|\bs
-\br\|_2\leq\varepsilon_N \}\,\mathrm{d}\bs \,\mathrm{d}\br+h(\ag_N\varepsilon_N).
\end{eqnarray*}
Now for fixed $\br$ it is not difficult to show that
$\int_{R_0}I \{ \|\bs-\br\|_2\leq\varepsilon_N \}\,\mathrm{d}\bs\leq6
\varepsilon_N^d$.
(The constant 6 could be replaced with $\uppi ^{d/2}/\Gamma(d/2+1))$.
\end{pf*}
\begin{pf*}{Proof of Proposition~\ref{p:example}} Observe that
\[
\|\widehat C_N - X^\star\|_\cS^2
= \dint\Biggl\{\frac{1}{N} \sum_{n=1}^N
[ X(\bs_n; t) X(\bs_n;u) - X(\bzero; t) X(\bzero;u)]\Biggr\}^2\,\mathrm{d}t\,\mathrm{d}u.
\]
Therefore,
\[
\|\widehat C_N - X^\star\|_\cS^2 \le2 I_1(N) + 2I_2(N),
\]
where
\[
I_1(N) = \dint\Biggl\{\frac{1}{N} \sum_{n=1}^N
X(\bs_n; t) \bigl(X(\bs_n; u) - X(\bzero;u)\bigr) \Biggr\}^2\,\mathrm{d}t\,\mathrm{d}u
\]
and
\[
I_2(N) = \dint\Biggl\{\frac{1}{N} \sum_{n=1}^N
X(\bzero;u) \bigl(X(\bs_n; t) - X(\bzero;t)\bigr) \Biggr\}^2\,\mathrm{d}t\,\mathrm{d}u.
\]

We will show that $E I_1(N) \to0$. The argument for $I_2(N)$ is
the same. Observe that
\begin{eqnarray*}
I_1(N)
&=& \frac{1}{N^2} \sum_{k, \ell=1}^N
\dint X(\bs_k; t) \bigl(X(\bs_k; u) - X(\bzero;u)\bigr)
X(\bs_\ell; t) \bigl(X(\bs_\ell; u) - X(\bzero;u)\bigr) \,\mathrm{d}t\,\mathrm{d}u
\\
&=& \frac{1}{N^2} \sum_{k, \ell=1}^N
\int X(\bs_k; t) X(\bs_\ell; t) \,\mathrm{d}t
\int\bigl(X(\bs_k; u) - X(\bzero;u)\bigr)\bigl(X(\bs_\ell; u) - X(\bzero;u)\bigr)\,\mathrm{d}u.
\end{eqnarray*}
Thus,
\[
E I_1(N) \le\frac{1}{N^2} \sum_{k, \ell=1}^N
\biggl\{ E\biggl(\int X(\bs_k; t) X(\bs_\ell; t) \,\mathrm{d}t\biggr)^2\biggr\}^{1/2}
\biggl\{ E\biggl(\int Y_k(u) Y_\ell(u) \,\mathrm{d}u \biggr)^2\biggr\}^{1/2},
\]
where
\[
Y_k(u) = X(\bs_k; u) - X(\bzero;u).
\]
We first deal with the integration over $t$:
\begin{eqnarray*}
E\biggl(\int X(\bs_k; t) X(\bs_\ell; t) \,\mathrm{d}t\biggr)^2
&\le& E \int X^2(\bs_k; t)\,\mathrm{d}t\int X^2(\bs_\ell; t)\,\mathrm{d}t
= E [\| X(\bs_k)\|^2 \| X(\bs_\ell)\|^2 ]\\
&\le&\{ E \| X(\bs_k)\|^4 \}^{1/2} \{ E \| X(\bs_\ell)\|^4
\}^{1/2}
= E\| X(\bzero)\|^4.
\end{eqnarray*}
We thus see that
\begin{eqnarray*}
E I_1(N)
&\le&\{ E\| X(\bzero)\|^4\}^{1/2} \frac{1}{N^2} \sum_{k, \ell=1}^N
\biggl\{ E\biggl(\int Y_k(u) Y_\ell(u) \,\mathrm{d}u\biggr )^2\biggr\}^{1/2}
\\
&\le&\{ E\| X(\bzero)\|^4 \}^{1/2} \frac{1}{N^2} \sum_{k, \ell=1}^N
\biggl\{ E\biggl(\int Y_k^2 (u)\,\mathrm{d}u\biggr)^2 \biggr\}^{1/4}
\biggl\{ E\biggl(\int Y_\ell^2 (u)\,\mathrm{d}u\biggr)^2 \biggr\}^{1/4}
\\
&=& \{ E\| X(\bzero)\|^4 \}^{1/2}
\Biggl[\frac{1}{N} \sum_{k=1}^N
\biggl\{ E\biggl(\int Y_k^2 (u)\,\mathrm{d}u\biggr)^2 \biggr\}^{1/4}
\Biggr]^2.
\end{eqnarray*}
Consequently, to complete the verification of \refeq{convXstar}, it
suffices to show that
\[
\lim_{N\to\infty} \frac{1}{N} \sum_{k=1}^N
\biggl\{ E\biggl(\int Y_k^2 (u)\,\mathrm{d}u\biggr)^2 \biggr\}^{1/4} = 0.
\]
The above relation will follow from
%
\begin{equation}\label{e:di1}
\lim_{k\to\infty} E\biggl(\int Y_k^2 (u)\,\mathrm{d}u\biggr)^2 = 0.
\end{equation}
To verify \refeq{di1}, first notice that, by the orthonormality of the
$e_j$,
\[
\int Y_k^2 (u)\,\mathrm{d}u = \sum_{j=1}^\infty\bigl(\xi_j(\bs_k) - \xi
_j(\bzero)\bigr)^2.
\]
Therefore, by the independence of the processes $\xi_j$,
\begin{eqnarray*}
E\biggl(\int Y_k^2 (u)\,\mathrm{d}u\biggr)^2
&=& \sum_{j=1}^\infty E\bigl(\xi_j(\bs_k) - \xi_j(\bzero)\bigr)^4\\
&&{} + \sum_{i\neq j}
E\bigl(\xi_i(\bs_k) - \xi_i(\bzero)\bigr)^2
E\bigl(\xi_j(\bs_k) - \xi_j(\bzero)\bigr)^2.
\end{eqnarray*}
The covariance structure was specified so that
\[
E\bigl(\xi_j(\bs_k) - \xi_j(\bzero)\bigr)^2 = 2\la_j\bigl(1 -\rho_j(\|\bs
_k\|)\bigr),
\]
so the normality yields
\begin{eqnarray*}
E\biggl(\int Y_k^2 (u)\,\mathrm{d}u\biggr)^2
&\le&12 \sum_{j=1}^\infty\la_j^2 \bigl(1 -\rho_j(\|\bs_k\|)\bigr)^2\\
&&{} + 4 \Biggl\{ \sum_{j=1}^\infty\la_j \bigl(1 -\rho_j(\|\bs_k\|)\bigr)\Biggr\}^2.
\end{eqnarray*}
The right-hand side tends to zero by the Dominated Convergence theorem.
This establishes \refeq{di1}, and completes the proof of
\refeq{convXstar}.
\end{pf*}
\begin{pf*}{Proof of Proposition \ref{pr:dep_inc}}
We only check inconsistency of the sample mean. In view of the
proof of Proposition \ref{p:gnis}, we have now the lower bound
\begin{eqnarray*}
E \Biggl\|\frac{1}{N}\sum_{k=1}^{N}X(\bs_{k,N})-\mu\Biggr\|^2
&\geq&
\frac{1}{N^2}\sum_{k=1}^{N}\sum_{\ell=1}^{N}
b(\|\bs_{k,N}-\bs_{\ell,N}\|_2)\\
&\geq& b(\rho)I_\rho^2(\mathfrak{S}_N),
\end{eqnarray*}
which is by assumption bounded away from zero for $N\to\infty$.
\end{pf*}

\section*{Acknowledgements}
The research was partially supported by
NSF Grants DMS-08-04165 and DMS-09-31948 at Utah State University,
by the Banque National de Belgique and
Communaut\'e fran\c{c}aise de Belgique -- Actions de Recherche
Concert\'ees (2010--2015). We thank O. Gromenko and X. Zhang
for performing the numerical simulations reported in this paper. We
also would like to
thank two referees and the associated editor for giving clear
guidelines to improve the presentation of this paper.

\begin{supplement}
\stitle{Supplement to ``Consistency of the mean and the
principal components of spatially distributed
functional data''}
\slink[doi]{10.3150/12-BEJ418SUPP}
\sdatatype{.pdf}
\sfilename{BEJ418\_supp.pdf}
\sdescription{We provide additional examples, some remarks concerning
Assumption~\ref{ass:dep-cov} and the regular sampling design as well as a remark on the
sharpness of our bounds.}
\end{supplement}


\printhistory

\end{document}